\documentclass[reqno]{amsart}


\usepackage{amssymb}
\usepackage{amsthm}
\usepackage{amsmath}
\usepackage{float}
\usepackage{graphicx}
\usepackage{enumitem}


\theoremstyle{plain}
 \newtheorem{theorem}{Theorem}[section]
 \newtheorem{lemma}[theorem]{Lemma}
 \newtheorem{corollary}[theorem]{Corollary}

\theoremstyle{definition}

 \newtheorem{remark}[theorem]{Remark}


\numberwithin{equation}{section}



\renewcommand{\a}{\alpha}
\renewcommand{\b}{\beta}
\newcommand{\g}{\gamma}
\newcommand{\G}{\Gamma}
\renewcommand{\d}{\delta}

\renewcommand{\k}{\kappa}

\renewcommand{\o}{\omega}


\newcommand{\ps}{\emptyset}
\newcommand{\sst}{\subseteq}

\newcommand{\rest}{\upharpoonright}

\newcommand{\conc}{{}^\smallfrown\!\!}



\newcommand{\ddd}{\mathfrak D}



\newcommand{\cc}{\mathcal C}
\newcommand{\cd}{\mathcal D}


\newcommand{\seq}[1]{\left<#1\right>}
\newcommand{\set}[1]{\left\{#1\right\}}
\newcommand{\abs}[1]{\left\vert#1\right\vert}


\newcommand{\otp}{\operatorname{otp}}
\newcommand{\cf}{\operatorname{cf}}


\author[B. Kuzeljevic]{Borisa Kuzeljevic}
\address{Department of mathematics and informatics, University of Novi Sad, Serbia}
\curraddr{}
\email{borisha@dmi.uns.ac.rs}
\urladdr{}
\dedicatory{}

\author[S. Todorcevic]{Stevo Todorcevic}
\address{Department of Mathematics, University of Toronto, Toronto, Canada, M5S 2E4. Institut de Math\'ematiques de Jussieu, UMR 7586, 2 pl. Jussieu, Case 7012, 75251 Paris Cedex 05, France. Mathematical Institute SANU, Kneza Mihaila 36, 11001 Belgrade, Serbia.}
\curraddr{}
\email{stevo@math.toronto.edu}
\email{stevo.todorcevic@imj-prg.fr}
\email{stevo@mi.sanu.ac.rs}
\urladdr{}
\dedicatory{}

\title[Cofinal types on $\omega_2$]{Cofinal types on $\omega_2$}
\keywords{}
\subjclass[2010]{}
\thanks{The first author has been partially supported by the Science Fund of the Republic of Serbia grant no. 6062228}
\date{July 25, 2021}


\begin{document}

\begin{abstract}
In this paper we start the analysis of the class $\mathcal D_{\aleph_2}$, the class of cofinal types of directed sets of cofinality at most $\aleph_2$.
We compare elements of $\mathcal D_{\aleph_2}$ using the notion of Tukey reducibility.
We isolate some simple cofinal types in $\mathcal D_{\aleph_2}$, and then proceed to show which of these types have an immediate successor in the Tukey ordering of $\mathcal D_{\aleph_2}$.
\end{abstract}

\maketitle

\begin{figure}[H]

\unitlength 0.7mm
\linethickness{0.5pt}

\begin{picture}(100,125)(0,0)


\put(50,4){\line(2,1){40}}
\put(50,4){\line(-2,1){40}}
\put(50,4){\line(0,1){20}}
\put(10,24){\line(0,1){20}}
\put(10,24){\line(2,1){40}}
\put(90,24){\line(-2,1){40}}
\put(50,24){\line(-2,1){40}}
\put(10,44){\line(2,1){40}}
\put(90,44){\line(-2,1){40}}
\put(50,64){\line(2,1){40}}
\put(50,64){\line(-2,1){40}}
\put(50,24){\line(2,1){40}}
\put(90,24){\line(0,1){20}}
\put(10,44){\line(0,1){20}}
\put(50,44){\line(0,1){20}}
\put(90,44){\line(0,1){20}}
\put(10,64){\line(0,1){20}}
\put(90,64){\line(0,1){20}}
\put(10,84){\line(2,1){40}}
\put(90,84){\line(-2,1){40}}
\put(50,104){\line(0,1){20}}


\put(52,2){\makebox(0,0)[l]{$1$}}
\put(8,23){\makebox(0,0)[r]{$\o$}}
\put(92,23){\makebox(0,0)[l]{$\o_2$}}
\put(52,23){\makebox(0,0)[l]{$\o_1$}}
\put(8,45){\makebox(0,0)[r]{$\o\times\o_1$}}
\put(8,64){\makebox(0,0)[r]{$[\o_1]^{<\o}$}}
\put(8,85){\makebox(0,0)[r]{$\o_2\times [\o_1]^{<\o}$}}
\put(92,45){\makebox(0,0)[l]{$\o_1\times\o_2$}}
\put(92,64){\makebox(0,0)[l]{$[\o_2]^{\le\o}$}}
\put(52,45){\makebox(0,0)[l]{$\o\times\o_2$}}
\put(53,64){\makebox(0,0)[l]{$\o\times\o_1\times\o_2$}}
\put(54,105){\makebox(0,0)[l]{$[\o_1]^{<\o}\times [\o_2]^{\le\o}$}}
\put(92,85){\makebox(0,0)[l]{$\o\times [\o_2]^{\le\o}$}}
\put(52,125){\makebox(0,0)[l]{$[\o_2]^{<\o}$}}
    \put(50,124){\circle*{1.5}}
    \put(10,24){\circle*{1.5}}
    \put(50,24){\circle*{1.5}}
    \put(90,24){\circle*{1.5}}
    \put(10,44){\circle*{1.5}}
    \put(10,64){\circle*{1.5}}
    \put(10,84){\circle*{1.5}}
    \put(50,44){\circle*{1.5}}
    \put(50,64){\circle*{1.5}}
    \put(50,104){\circle*{1.5}}
    \put(90,84){\circle*{1.5}}
    \put(90,64){\circle*{1.5}}
    \put(90,44){\circle*{1.5}}
    \put(90,24){\circle*{1.5}}
    \put(50,4){\circle*{1.5}}

\end{picture}
\caption{Tukey ordering of simple elements of the class $\mathcal D_{\aleph_2}$}\label{slika}
\end{figure}
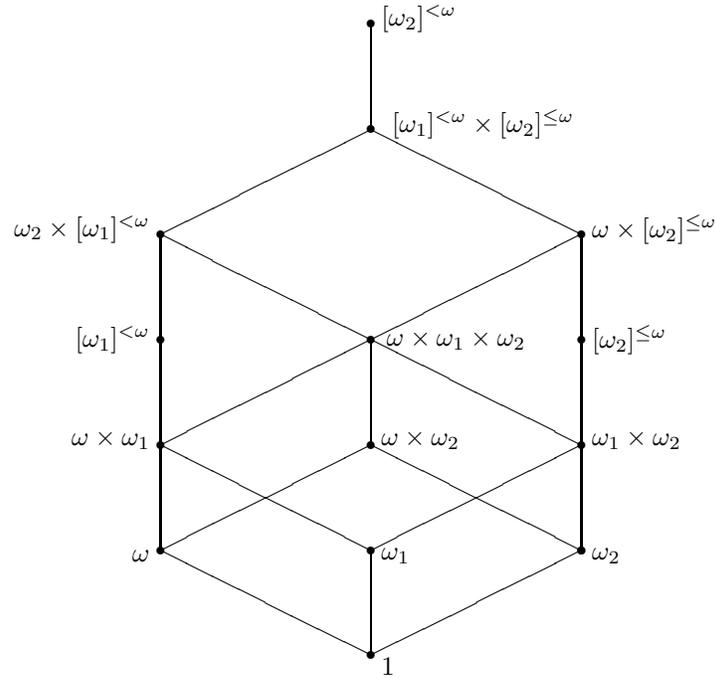

\section{Introduction}

The purpose of this paper is to start the analysis of the class $\mathcal D_{\aleph_2}$, the class of directed sets of cofinality at most $\aleph_2$.
Up to now, there is a satisfactory understanding of cofinal types of directed sets of cardinality at most $\aleph_1$.
This line of research originates from the work of Tukey in \cite{tukey}, and from the work of Birkhoff in \cite{birkhoff} and Day in \cite{mmday}.
Afterwards, Isbell in \cite{isbell1,isbell2} and Schmidt in \cite{schmidt}, continued with finer analysis of the class of cofinal types of all directed sets.
In the early 1980s, the second author in \cite{stevo} completed Isbell's investigation of cofinal types of directed sets of cardinality at most $\aleph_1$, by proving that only five cofinal types in $\mathcal D_{\aleph_1}$ can be found in ZFC without additional set-theoretic assumptions.
These are $1$, $\o$, $\o_1$, $\o\times\o_1$, and $[\o_1]^{<\o}$.
In the same paper, the second author proved that, assuming the Continuum Hypothesis, this class can be very rich.
In particular, under CH, the class $\mathcal D_{\aleph_1}$ contains $2^{\mathfrak c}$ pairwise cofinally non-equivalent directed sets.
After that, he was also able to extend those results to all transitive relations on $\o_1$ in \cite{transitive}.

It is worth mentioning, although it is not directly connected to our work in this paper, that there has been a significant amount of work on cofinal types of definable directed sets, and cofinal types of ultrafilters when viewed as directed sets.
The work on definable directed sets is due to the second author and Solecki in \cite{stevoslawek,stevoslawek2}.
The work on cofinal types of ultrafilters viewed as directed sets started with the work of Milovich in \cite{milovich}, and continued through papers of the second author with Dobrinen \cite{stevonatasha,stevonatasha1,stevonatasha2},  and with Raghavan in \cite{stevodilip}.
Most recently, Raghavan and Shelah in \cite{raghavanshelah}, and the first author and Raghavan in \cite{borisadilip} contributed to this topic.

The structure of the simplest directed sets in $\mathcal D_{\aleph_2}$ is presented in Figure \ref{slika}. 
Note that the order of a directed set in this picture is always assumed to be the standard one, $<$ for ordinals, $\sst$ for families of sets, and the product ordering for products of directed sets.
At the top is $[\o_2]^{<\o}$, the maximal cofinal type of directed sets of cardinality $\aleph_2$.
At the bottom is $1$, the minimal cofinal type of all directed sets.
All the other simple types are obtained as products of $\o$, $\o_1$, $\o_2$, $[\o_1]^{<\o}$, and $[\o_2]^{\le\o}$.
Note that the set $[\o_2]^{\le\o}$ is not of cardinality $\aleph_2$, but it contains a cofinal subset $\ddd$ of cardinality $\aleph_2$, as Lemma \ref{existence} shows.

The paper is organized as follows. In the second section we introduce all the relevant notions. In the third section we prove some basic inequalities between cofinal types of directed set in $\mathcal D_{\aleph_2}$. In this section we prove that all these inequalities are strict, i.e. these cofinal types are all different with resepect to cofinal equivalence. In the fourth section we prove that, under certain set-theoretic assumptions, there is a directed set between $\o_1\times\o_2$ and $\ddd$. Namely, we prove:

\begin{theorem}\label{prvaizmedju}
 Assume GCH and that there is a non-reflecting stationary subset of $S^2_0$.
 Then there is a directed set $D$ such that $\o_1\times\o_2<_T D<_T \ddd$
\end{theorem}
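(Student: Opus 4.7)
My plan is to use the non-reflecting stationary $S\sst S^2_0$ to enrich countable subsets of $\o_2$ with a natural-number depth function along $S$. Fix for each $\a\in S$ an increasing cofinal sequence $e_\a:\o\to\a$, and let $D$ consist of pairs $(x,f)$ with $x\in\ddd$ and $f:x\cap S\to\o$ satisfying $e_\a(f(\a))\ge\sup(x\cap\a)$ for every $\a\in x\cap S$, ordered by $(x,f)\le(x',f')$ iff $x\sst x'$ and $f\le f'$ pointwise on $x\cap S$. Directedness follows by taking $x=x_0\cup x_1$ (still countable) for two admissible pairs and, using $\cf(\a)=\o$ for each $\a\in x\cap S$, choosing $f(\a)$ large enough to dominate $f_0(\a),f_1(\a)$ and to satisfy the sup-condition.

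For the upper bound $D\le_T\ddd$, I would define a cofinal map $\varphi:\ddd\to D$ by $\varphi(x)=(\bar x,f_{\bar x})$, where $\bar x$ is the countable closure of $x$ under the sequences $e_\a$ (ensuring $\sup(\bar x\cap\a)<\a$ for each $\a\in\bar x\cap S$) and $f_{\bar x}(\a)$ is the least $n$ with $e_\a(n)\ge\sup(\bar x\cap\a)$. Any $(y,g)\in D$ is dominated by $\varphi(x)$ once $x\supseteq y\cup\{e_\a(g(\a)):\a\in y\cap S\}$, giving cofinality. For $\o_1\times\o_2\le_T D$, I would check that $D$ Tukey-dominates both $\o_1$ and $\o_2$: the family $\{(\{\a\},\ps):\a<\o_2,\a\notin S\}$ is $\o_2$-unbounded in $D$ since any bound has countable first coordinate, and any $\o_1$-sized family of admissible singletons with distinct supports inside a fixed $\b\in S^2_1$ is $\o_1$-unbounded for the same reason; the standard product construction then embeds $\o_1\times\o_2$ into $D$.

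The principal work goes into the two strict inequalities. For $\o_1\times\o_2<_T D$, I would assume toward contradiction a cofinal monotone matrix $\{(x_{\xi,\a},f_{\xi,\a}):\xi<\o_1,\a<\o_2\}\sst D$, set $X_\a=\bigcup_{\xi<\o_1}x_{\xi,\a}$, and pick $\a^*$ of cofinality $\o_1$ in the club of $\a<\o_2$ closed under $\b\mapsto\sup X_\b$. The cofinality of the matrix at $(\cdot,\a^*)$, combined with the $\o$-valuedness of the $f_{\xi,\a^*}$'s, should force $S\cap X_{\a^*}\cap\a^*$ to be stationary in $\a^*$, contradicting the non-reflection of $S$. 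For $D<_T\ddd$, I would exploit the $\o$-valued second coordinate: under GCH, a calibre/pigeonhole argument on the fibre structure of $D$ produces an $\aleph_2$-sized family in $\ddd$ whose image under any putative Tukey map into $D$ collapses to a bounded family, yielding the contradiction. The first strict inequality is the principal technical obstacle, requiring a careful extraction of the reflection of $S$ from the matrix structure, in the spirit of classical walks-on-ordinals arguments lifted one cardinal higher.
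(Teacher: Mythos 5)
Your construction breaks down at the very first step: the poset $D$ you define is not directed. The admissibility condition $e_\a(f(\a))\ge\sup(x\cap\a)$ constrains $x$ only at those $\a\in S$ that actually belong to $x$ (and, since $e_\a$ maps into $\a$, it forces $\sup(x\cap\a)<\a$ there); but nothing prevents an admissible pair $(x_0,f_0)$ from having $x_0\cap\a$ cofinal in $\a$ for some $\a\in S\setminus x_0$. Fix such an $\a$ and such an $(x_0,f_0)$, and let $x_1=\set{\a}$ with $f_1(\a)=0$, which is admissible because $\sup(x_1\cap\a)=0$. Any common upper bound $(y,g)$ would need $y\supseteq x_0\cup\set{\a}$, hence $\a\in y\cap S$ and $\sup(y\cap\a)=\a$, and no $n<\o$ satisfies $e_\a(n)\ge\a$. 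The same confusion undermines your convergent map $\varphi$: closing $x$ under the cofinal sequences $e_\a$ makes $\bar x\cap\a$ \emph{cofinal} in $\a$ for every $\a\in\bar x\cap S$, which is the exact opposite of the property ``$\sup(\bar x\cap\a)<\a$'' you claim the closure ensures, so $\varphi(x)$ is typically not an element of $D$ at all. If you try to repair the definition by demanding that $x$ be bounded below every $\a\in S$ (or meet each ladder finitely), you arrive essentially at the directed set $D_{\mathcal C}$ of Section 6, and that object sits Tukey \emph{above} $[\o_2]^{\le\o}$, not strictly below it, so this family of constructions cannot produce $D<_T\ddd$. Finally, the two strictness claims, which you correctly identify as the principal work, are only gestured at (``should force,'' ``in the spirit of'') and contain no checkable argument.

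The paper takes a genuinely different route that you would need to adopt or replace. By Gregory's theorem, GCH together with a non-reflecting stationary subset of $S^2_0$ yields an $\aleph_1$-branching $\aleph_2$-Souslin tree $T\sst\o_1^{<\o_2}$; the intermediate directed set is $D_T$, the collection of all $X\sst T$ such that $\set{\a<\o_1:t\conc\seq{\a}\in\widehat X}$ is non-stationary in $\o_1$ for every $t\in T$, ordered by inclusion. The Souslin property (no antichain of size $\aleph_2$) is what bounds the cardinality of elements of $D_T$, yields the partition property used against $\o_1\times\o_2$, and, via an elementary-submodel argument, produces the bounded uncountable subsets of cofinal sets that rule out $[\o_2]^{\le\o}\le_T D_T$. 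The non-reflecting stationary set enters only through Gregory's construction of $T$; your proposal uses it directly but never produces a comparable combinatorial engine.
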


Finally, in the last section, we show that, under the same set-theoretic assumptions, there is a directed set between $[\o_1]^{<\o}\times \ddd$ and $[\o_2]^{<\o}$.

\begin{theorem}\label{drugaizmedju}
 Assume GCH and that there is a non-reflecting stationary subset of $S^2_0$.
 Then there is a directed set $D$ such that $[\o_1]^{<\o}\times \ddd<_T D<_T [\o_2]^{<\o}.$
\end{theorem}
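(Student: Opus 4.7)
The plan is to construct $D$ by a combinatorial construction parallel to the one behind Theorem~\ref{prvaizmedju}, again using a non-reflecting stationary $S\sst S^2_0$ to impose a coherence condition on a family of finite subsets of $\o_2$, but with the $\o_1$-part of the coding inflated from single ordinals to finite subsets. Under GCH I would fix an enumeration $\seq{x_\a:\a<\o_2}$ of $\ddd$, and for each $\d\in S$ an increasing sequence $\seq{\d_n:n<\o}$ cofinal in $\d$. The directed set $D$ would then be a subfamily of $[\o_2]^{<\o}$ ordered by $\sst$, with the coherence requirement that $F\in D$ iff for every $\d\in F\cap S$ the set $F\cap\d$ is controlled by $\seq{\d_n:n<\o}$ together with a finite subset of $\o_1$ attached at $\d$. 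This is the same shape of coding as for Theorem~\ref{prvaizmedju}, but replacing single $\o_1$-labels by finite ones, which should raise the lower Tukey bound from $\o_1\times\o_2$ to $[\o_1]^{<\o}\times\ddd$.

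Next I would verify the upper bound $D\le_T[\o_2]^{<\o}$, immediate from the inclusion $D\sst[\o_2]^{<\o}$, and the lower bound $[\o_1]^{<\o}\times\ddd\le_T D$, via an explicit Tukey map sending $(a,x_\a)\in[\o_1]^{<\o}\times\ddd$ to a finite subset of $\o_2$ that encodes $a$ in the attached $\o_1$-labels and $x_\a$ through the $S$-indexed data up to level $\a$. The strictness $D<_T[\o_2]^{<\o}$ should then follow by a calibre-style $\D$-system argument: any $\aleph_2$ elements of $D$ should contain an uncountable subfamily with a common upper bound in $D$, whereas $[\o_2]^{<\o}$ fails this since countably many distinct singletons are already unbounded, giving $[\o_2]^{<\o}\not\le_T D$.

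The remaining strict inequality $[\o_1]^{<\o}\times\ddd<_T D$ is where the real work lies. Supposing $h\colon[\o_1]^{<\o}\times\ddd\to D$ is a monotone cofinal map, the plan is to select some $\d\in S$ together with an increasing chain $\seq{M_n:n<\o}$ of countable elementary submodels with $\sup_nM_n\cap\o_2=\d$, and to use the non-reflection of $S$ to build an unbounded sequence $\seq{F_n:n<\o}$ in $D\cap(M_{n+1}\setminus M_n)$ whose $h$-preimages have $\o_1$-coordinates confined to $M_n\cap\o_1$. The requirement that $h$ dominate these $F_n$ would then force a countable cofinal subsequence of $\ddd$ below level $\d$, contradicting $\cf(\ddd)=\o_2$.

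The hard step is this last one: the coding in $D$ must be sharp enough that non-reflection really blocks the ``finite-to-countable upgrade'' of $\o_1$-information which a cofinal map from $[\o_1]^{<\o}\times\ddd$ would be forced to perform at a $\d\in S$. It is precisely here that both GCH (via the enumeration of $\ddd$) and the non-reflecting character of $S$ become essential, and producing the exact coherence condition under which the argument goes through will require the most delicate bookkeeping.
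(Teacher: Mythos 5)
There is a fatal structural problem at the very first step: you take $D$ to be a subfamily of $[\o_2]^{<\o}$ ordered by $\sst$. Any directed family of \emph{finite} sets ordered by inclusion has the property that every infinite subset consisting of distinct elements is unbounded, since a finite set has only finitely many subsets and so cannot lie above infinitely many distinct finite sets. Consequently, once $\cf(D)=\aleph_2$ (which is forced by $\ddd\le_T D$), any injection of $[\o_2]^{<\o}$ into a cofinal subset of $D$ is a Tukey map, so $D\equiv_T[\o_2]^{<\o}$ no matter what coherence condition you impose. In particular your proposed calibre property for the upper strict inequality --- that any $\aleph_2$ many elements of $D$ contain an uncountable subfamily with a common upper bound in $D$ --- is impossible for finite sets under inclusion. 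No amount of bookkeeping in the coding can repair this while the elements of $D$ stay finite and the order stays $\sst$.

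The paper avoids this by taking the intermediate object to be $[\o_1]^{<\o}\times D_{\cc}$, where $D_{\cc}$ consists of \emph{countable} subsets of $\o_2$ having finite intersection with every member of a GCH-derived family $\cc=\set{C_\a:\a\in S}$ of order-type-$\o$ sets with the covering property that every $X\sst\o_2$ of size $\aleph_2$ contains some $C_\a$. Countability of the elements is exactly what lets the three needed boundedness behaviours coexist: there is a cofinal subset of $D_{\cc}$ in which every uncountable set is unbounded (giving $[\o_2]^{\le\o}\le_T D_{\cc}$); the countable sets $\set{\set{\xi}:\xi\in C_\a}$ are unbounded in $D_{\cc}$ yet have bounded image under any map into $[\o_2]^{\le\o}\times[\o_1]^{<\o}$, which gives the lower strictness using only the covering property of $\cc$ (not non-reflection); and for the upper strictness a $\Delta$-system refinement together with a chain of elementary submodels indexed along a club avoiding $S$ (this is where non-reflection actually enters) produces a countable bounded subfamily inside any $\aleph_2$-sized subset, refuting $[\o_2]^{<\o}\le_T[\o_1]^{<\o}\times D_{\cc}$. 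Note that you also assign the non-reflection hypothesis to the wrong inequality: it is needed for $D<_T[\o_2]^{<\o}$, not for $[\o_1]^{<\o}\times\ddd<_T D$.
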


\section{Preliminaries}

We use standard set theoretic notation.
In particular, if $A$ is a set and $\kappa$ is a cardinal, then $[A]^{\kappa}$ denotes the set of all subsets of $A$ of cardinality $\kappa$.
Thus, for example, $[A]^1=\set{\set{x}:x\in A}$, whereas $[A]^{\omega}$ is the set of all countably infinite subsets of the set $A$, and $[A]^{\le\omega}$ is the set of all at most countable subsets of $A$.
For a function $f:X\to Y$ and a set $A\sst X$, we denote $f''A=\set{f(x):x\in A}$.

A partially ordered set $\seq{X,<}$ is \emph{directed} if for any $x$ and $y$ in $X$ there is some $z$ in $X$ such that $x<z$ and $y<z$.
A directed partially ordered set is called just a \emph{directed set}.
A subset $Y$ of a directed set $X$ is \emph{bounded} if there is some $x$ in $X$ such that $y\le x$ for each $y$ in $Y$.
Otherwise, $Y$ is \emph{unbounded} in $X$.
If $D$ and $E$ are two directed sets, we say that $f:D\to E$ is a \emph{Tukey function} if $f''X$ is unbounded in $E$ whenever $X$ is unbounded in $D$.
When there is a Tukey function from a directed set $D$ into a directed set $E$, we say that $D$ is \emph{Tukey reducible to} $E$, and write $D\le_T E$.
If there is a Tukey function $f:D\to E$, but there is no Tukey function from $E$ to $D$, then we write $D<_T E$.
Note that $\le_T$ is a transitive relation.
For two directed sets $D$ and $E$, by results in \cite{schmidt} and \cite{tukey}, $D\le_T E$ if and only if there is a function $g:E\to D$ such that for every $d\in D$ there is an $e\in E$ such that $g(e')\ge d$ for each $e'\ge e$.
A function $g$ with this property is called \emph{a convergent map} from $E$ to $D$.
A subset $C$ of a directed set $D$ is said to be \emph{cofinal} in $D$ if for every $d$ in $D$ there is some $c$ in $C$ such that $d\le c$.
We say that two directed sets $D$ and $E$ are \emph{cofinally similar} if they are isomorphic to cofinal subsets of a single partially ordered set.
Recall that by results of Tukey in \cite{tukey}, two directed sets $D$ and $E$ are cofinally similar ($D\equiv_T E$) if and only if $D\le_T E$ and $E\le_T D$.
He also showed that $\equiv_T$ is an equivalence relation.
The equivalence classes of this relation are called \emph{cofinal types}.
For a directed set $D$, its \emph{cofinality} is the minimal cardinality of a cofinal subset of $D$.
Recall that $\cf(D)$ denotes the cofinality of a directed set $D$, and that $\cd_{\k}$ denotes the class of cofinal types of directed sets of cofinality at most $\k$.

Suppose that $\k$ is a cardinal. A set $C\sst \k$ is a club in $\k$ if it is closed and unbounded in $\k$. A set $S\sst \k$ is stationary in $\k$ if $S\cap C\neq \ps$ for every club $C$ in $\k$.
A set $S$ stationary in $\k$ is said to be \emph{non-reflecting} if for every $\g<\k$ of uncountable cofinality, the set $S\cap \g$ is not stationary in $\g$. Regarding notation, we will also be using $S^2_0=\set{\a<\o_2:\cf(\a)=\o}$ and $S^2_1=\set{\a<\o_2:\cf(\a)=\o_1}$.

For directed sets $D_i$ ($i\in I$), their product is the set $\prod_{i\in I}D_i$ equipped with a relation $\le$ defined as follows: $\seq{d_i:i\in I}\le \seq{e_i:i\in I}$ iff $d_i\le_{D_i} e_i$ for each $i\in I$.
If $j\in I$, then $\pi_{D_j}:\prod_{i\in I}D_i\to D_j$ denotes the projection to $D_j$, i.e. $\pi_{D_j}(d_i:i\in I)=d_j$.

\begin{lemma}\label{upperbound}
 Suppose that $D_1,\dots,D_n$ are directed sets.
 If $D_1,\dots,D_n\le_T D$, then $D_1\times\cdots\times D_n\le_T D$.
\end{lemma}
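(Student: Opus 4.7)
The plan is to use the convergent-map characterization of Tukey reducibility recalled in the Preliminaries: $A\le_T B$ iff there is a map $g:B\to A$ such that every $a\in A$ is eventually dominated by the values of $g$ above some point of $B$. This characterization turns the product into a completely routine construction, because the directed-set structure on $D$ lets us combine finitely many ``eventual'' witnesses.

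First, since $D_i\le_T D$ for each $i\in\{1,\dots,n\}$, pick convergent maps $g_i:D\to D_i$. That is, for every $d_i\in D_i$ there exists $e_i\in D$ such that $g_i(e')\ge_{D_i} d_i$ whenever $e'\ge_D e_i$. Now define $g:D\to D_1\times\cdots\times D_n$ by
\[
 g(d)=\seq{g_1(d),\dots,g_n(d)}.
\]
I claim $g$ is convergent, which by the characterization yields $D_1\times\cdots\times D_n\le_T D$.

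To verify convergence, fix any $(d_1,\dots,d_n)\in D_1\times\cdots\times D_n$. For each $i$, convergence of $g_i$ gives some $e_i\in D$ with $g_i(e')\ge_{D_i} d_i$ for all $e'\ge_D e_i$. Since $D$ is directed, iteratively choose some $e\in D$ with $e\ge_D e_i$ for every $i\le n$. Then for any $e'\ge_D e$ we have $e'\ge_D e_i$ for each $i$, hence $g_i(e')\ge_{D_i} d_i$ for each $i$, which by definition of the product order means $g(e')\ge (d_1,\dots,d_n)$. This is exactly the convergence condition at the point $(d_1,\dots,d_n)$.

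There is no real obstacle here beyond correctly unpacking the product ordering: the only nontrivial ingredient is the directedness of $D$, used to merge the finitely many bounds $e_1,\dots,e_n$ into a single $e$. (This is also why the statement is restricted to finite products; for infinite products one cannot, in general, combine infinitely many such witnesses.)
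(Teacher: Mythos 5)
Your proof is correct. Where the paper simply cites Proposition 2 of Todorcevic's \emph{Directed sets and cofinal types}, which states that $D_1\times\cdots\times D_n$ is the least upper bound of $D_1,\dots,D_n$ in the Tukey ordering (so the conclusion is immediate from $D$ being a common upper bound), you instead give a self-contained argument via the convergent-map characterization: you pick convergent maps $g_i:D\to D_i$, bundle them coordinatewise into $g:D\to D_1\times\cdots\times D_n$, and use directedness of $D$ to merge the finitely many witnesses $e_1,\dots,e_n$ into one. This is essentially the standard proof of the very fact the paper cites, so your route is more elementary and makes the role of finiteness explicit (your closing remark about infinite products is apt), at the cost of a few lines; the paper's route is shorter and also records the stronger ``least upper bound'' fact, which it reuses elsewhere (e.g.\ in Lemma \ref{strictgeneral}). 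Both arguments are sound.
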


\begin{proof}
 By \cite[Proposition 2]{directed}, $D_1\times\cdots\times D_n$ is the least upper bound of $D_1,\dots,D_n$.
 Since $D_1,\dots,D_n\le_T D$, it must be that $D_1\times\cdots\times D_n\le_T D$.
\end{proof}

\begin{lemma}\label{strictgeneral}
 Suppose that $D$ and $E$ are Tukey incomparable cofinal types. Then there is no Tukey map from $D\times E$ to either $D$ or $E$.
\end{lemma}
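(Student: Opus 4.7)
The plan is to exploit the fact that the natural coordinate embeddings into a product are Tukey maps, and then combine this with transitivity of $\le_T$ to contradict Tukey incomparability.

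Concretely, I would fix arbitrary $d_0\in D$ and $e_0\in E$ and consider the two maps $\iota_D:D\to D\times E$ defined by $\iota_D(d)=(d,e_0)$, and $\iota_E:E\to D\times E$ defined by $\iota_E(e)=(d_0,e)$. The first substantive step is to verify that $\iota_D$ is a Tukey map: if $X\sst D$ is unbounded and one supposes that $\iota_D''X$ is bounded in $D\times E$ by some $(d^*,e^*)$, then every $d\in X$ would satisfy $d\le d^*$, contradicting unboundedness. The same one-line argument shows $\iota_E$ is Tukey. Hence $D\le_T D\times E$ and $E\le_T D\times E$.

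Now suppose, towards a contradiction, that there is a Tukey map from $D\times E$ to $D$, i.e.\ $D\times E\le_T D$. Composing with $\iota_E$ and using transitivity of $\le_T$ (noted right after the definition of Tukey reducibility in the preliminaries) gives $E\le_T D$, contradicting the Tukey incomparability of $D$ and $E$. The symmetric argument, using $\iota_D$ in place of $\iota_E$, rules out a Tukey map from $D\times E$ into $E$.

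There is no real obstacle here; the whole content is the observation that projecting a coordinate out of a would-be bound in the product produces a bound for the fixed-coordinate image, so $D,E\le_T D\times E$ trivially. Everything else is bookkeeping with transitivity. One could alternatively phrase the argument with convergent maps from the Schmidt--Tukey characterization, but the Tukey-map formulation above is the shortest route.
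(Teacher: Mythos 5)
Your proposal is correct and follows essentially the same route as the paper: both rest on the fact that $D,E\le_T D\times E$ together with transitivity of $\le_T$ to contradict incomparability. The only difference is that you verify $D,E\le_T D\times E$ directly via the coordinate embeddings, whereas the paper simply cites Proposition 2 of \cite{directed} for this fact.
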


\begin{proof}
 By Proposition 2 of \cite{directed}, both $D$ and $E$ are Tukey below $D\times E$. If there were a Tukey map $f:D\times E\to E$, then we would have $D\le_T D\times E\le_T E$, and consequently $D\le_T E$ which is in contradiction with the assumption of the lemma.
\end{proof}

\begin{lemma}\label{existence}
 Directed set $[\o_2]^{\le\o}$ contains a cofinal subset $\ddd$ of size $\aleph_2$ with the property that every uncountable subset of $\ddd$ is unbounded in $[\o_2]^{\le\o}$.
 In particular, $[\o_2]^{\le \o}$ belongs to $\cd_{\aleph_2}$, i.e. $\cf\left([\o_2]^{\le\o}\right)\le\aleph_2$.
\end{lemma}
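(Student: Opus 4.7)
My plan is to construct $\ddd$ by enumerating $[\o_2]^{\le\o}$ in order type $\o_2$ and \emph{tagging} each enumerated set with its own index, so that any uncountable subfamily automatically spreads over uncountably many ordinals of $\o_2$ and therefore cannot be covered by a single countable set.

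Under the ambient hypothesis $2^{\aleph_0}\le\aleph_2$ (which the rest of the paper assumes via GCH), we have $|[\o_2]^{\le\o}|=\o_2^{\aleph_0}=\aleph_2$. I would fix an enumeration $[\o_2]^{\le\o}=\set{A_\a:\a<\o_2}$, put $D_\a:=A_\a\cup\set{\a}$, and let $\ddd:=\set{D_\a:\a<\o_2}$. Cofinality of $\ddd$ in $[\o_2]^{\le\o}$ is immediate, since any $B\in [\o_2]^{\le\o}$ equals $A_\a$ for some $\a$, and then $B\sst D_\a\in\ddd$. The size bound $|\ddd|\le\aleph_2$ is automatic from the indexing.

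The only nontrivial point is the unboundedness claim. Let $\mathcal F\sst\ddd$ be uncountable and set $I:=\set{\a<\o_2:D_\a\in\mathcal F}$. Because $D_\a=D_\b$ forces $\a,\b\in D_\a$, each non-empty fibre of $\a\mapsto D_\a$ is a subset of the corresponding $F\in\mathcal F$, hence countable; so $|I|\ge\aleph_1$. Since $\a\in D_\a$ for every $\a$, we have $I\sst\bigcup\mathcal F$, and therefore $\bigcup\mathcal F$ is uncountable. Any $B\in [\o_2]^{\le\o}$ bounding $\mathcal F$ in $[\o_2]^{\le\o}$ would force $\bigcup\mathcal F\sst B$, making $\bigcup\mathcal F$ countable---a contradiction.

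I do not foresee a real obstacle: the whole argument rests on the simple but effective device of inserting the index $\a$ into $D_\a$, which forces every uncountable subfamily to have uncountable union. The only cardinal-arithmetic care needed is keeping $|\ddd|\le\aleph_2$, and that is exactly what the enumeration of $[\o_2]^{\le\o}$ in type $\o_2$ provides under $2^{\aleph_0}\le\aleph_2$.
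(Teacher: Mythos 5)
Your construction is correct \emph{under the hypothesis you import}, but that hypothesis is not available here, and this is a genuine gap rather than a cosmetic one. Lemma \ref{existence} sits in the Preliminaries and is used as a ZFC fact: it is what places $[\o_2]^{\le\o}$ in $\cd_{\aleph_2}$ at all (so that it may appear in Figure \ref{slika}), and it is invoked in the gap theorem of Section 4, none of which carry any cardinal-arithmetic assumptions. GCH is only assumed locally in the last two sections. Your argument begins by enumerating $[\o_2]^{\le\o}$ in order type $\o_2$, which requires $\abs{[\o_2]^{\le\o}}=\aleph_2$, i.e.\ $2^{\aleph_0}\le\aleph_2$; if $2^{\aleph_0}>\aleph_2$ this fails outright, and then your $\ddd$ does not exist. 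The real content of the lemma is precisely the ZFC fact that $\cf\left([\o_2]^{\le\o}\right)=\aleph_2$ even though $[\o_2]^{\le\o}$ itself may be much larger; an enumeration of the whole poset cannot deliver that.

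The paper gets around this by building the cofinal family directly: fix injections $e_{\a}:\a+1\to\o_1$ with $e_{\a}(\a)=0$, set $F_{\g}(\a)=\set{\xi\le\a:e_{\a}(\xi)<\g}$, and take $\ddd=\set{F_{\g}(\a):\o_1\le\a<\o_2,\ \g\in C_{\a}}$, where $C_{\a}\sst\o_1$ is an unbounded set (obtained via the pressing-down lemma) on which $\g\sst F_{\g}(\a)$. Cofinality of this $\aleph_2$-sized family is checked by pushing a given countable $B\sst\a$ forward under $e_{\a}$, and unboundedness of uncountable subfamilies comes from the two built-in ``tags'': $e_{\a}(\a)=0$ forces $\a\in F_{\g}(\a)$ (playing the role of your $\a\in D_{\a}$), and $\g\sst F_{\g}(\a)$ handles the case where the $\a$'s repeat but the $\g$'s do not. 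Your tagging device $D_{\a}=A_{\a}\cup\set{\a}$ is sound and is essentially the same trick the paper uses later (in Section 6, where GCH \emph{is} assumed, the map $x\mapsto x\cup\set{\phi(x)}$ does exactly this); what is missing from your proof is the CH-free production of an $\aleph_2$-sized cofinal family to tag. To repair the proof you would either need to restrict the lemma to models of $2^{\aleph_0}\le\aleph_2$ (weakening the paper's ZFC results) or supply an argument like the one above.
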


\begin{proof}
 First, for each $\a<\o_2$ fix an injection $e_{\a}:\a+1\to \o_1$ such that $e_{\a}(\a)=0$.
 Now for every $\g<\o_1$ denote $$F_{\g}(\a)=\set{\xi\le\a:e_{\a}(\xi)<\g}.$$
 Note that for every $\o_1\le \a<\o_2$ there is an unbounded set $C_{\a}\sst \o_1$ such that $\g\sst F_{\g}(\a)$ whenever $\g\in C_{\a}$.
 To see this suppose it is not the case, i.e. that there is a $\d<\o_1$ such that for each $\g>\d$ we have $\g\nsubseteq F_{\g}(\a)$.
 This means that for every $\g>\d$ there is $\xi_{\g}<\g$ such that $e_{\a}(\xi_{\g})\ge\g$.
 Let $f:\o_1\setminus (\d+1)\to\o_1$ be given by $f(\g)=\xi_{\g}$.
 By the pressing down lemma, there is a stationary set $S\sst \o_1\setminus (\d+1)$ (stationary subset of $\o_1$), such that $\xi_{\g}=\xi$ for each $\g\in S$.
 But then $e_{\a}(\xi)>\g$ for every $\g\in S$ which is not possible because $S$ is unbounded in $\o_1$.
 Thus, we showed that there is an unbounded $C_{\a}\sst \o_1$ such that $\g\sst F_{\g}(\a)$ for every $\g\in C_{\a}$.
 Define now
 \[\ddd=\set{F_{\g}(\a):\o_1\le\a<\o_2\ \&\ \g\in C_{\a}}.\]
 
 We will prove that the set $\ddd$ is as required.
 Since $\a$ ranges over a subset of $\o_2$ and $\g$ ranges over a subset of $\o_1$, it is clear that $\ddd$ is of cardinality $\aleph_2$.
 Since every $e_{\a}$ is 1-1 function, and every $\g\in C_{\a}$ is a countable ordinal, it follows that each $F_{\g}(\a)$ is a countable set.
 Thus $\ddd\sst [\o_2]^{\le\omega}$.
 Next, we prove that $\ddd$ is cofinal in $[\o_2]^{\le\omega}$.
 Take any countable $B\sst \o_2$.
 Then there is some $\o_1\le \a<\o_2$ such that $B\sst \a$.
 Since $B$ is countable, $e_{\a}$ is 1-1, and $C_{\a}$ is unbounded in $\o_1$, there is some $\g\in C_{\a}$ such that $e_{\a}''B\sst \g$, i.e. $B\sst F_{\g}(\a)\in \ddd$.
 Thus, $\ddd$ is cofinal in $[\o_2]^{\le\omega}$.
 
 We still have to prove that every uncountable subset of $\ddd$ is unbounded in $[\o_2]^{\le\omega}$.
 Take any uncountable $X\sst \ddd$.
 Let us enumerate $X=\set{F_{\g_{\xi}}(\a_{\xi}): \xi<\o_1}$.
 We consider two cases: when the set $\Lambda=\set{\alpha_{\xi}: \xi<\o_1}$ is uncountable or when the set $\Gamma=\set{\g_{\xi}:\xi<\o_1}$ is uncountable.
 There is no other case possible because if both $\Lambda$ and $\Gamma$ were countable, then the set $X$ would also be countable.
 If $\Lambda$ is uncountable, then from definition of $e_{\a}$, in particular from $e_{\a}(\a)=0$, it follows that $\Lambda\sst \bigcup X$.
 Thus $\bigcup X$ is uncountable, so $X$ cannot be bounded in $[\o_2]^{\le\omega}$.
 Suppose now that $\Gamma$ is uncountable.
 Since each $\g_{\xi}\in \Gamma$ belongs to $C_{\a_{\xi}}$, we know that $\g_{\xi}\sst F_{\g_{\xi}}(\a_{\xi})$ for $\xi<\o_1$.
 Since $\Gamma$ is an uncountable set of ordinals, $\bigcup\Gamma$ is also uncountable.
 Now we have $\bigcup\Gamma=\bigcup_{\xi<\o_1}\g_{\xi}\sst \bigcup_{\xi<\o_1}F_{\g_{\xi}}(\a_{\xi})=\bigcup X$, so $\bigcup X$ is again uncountable.
 Consequently, $X$ cannot be bounded in $[\o_2]^{\le\omega}$.
 We showed that in both cases $X$ in unbounded, so we conclude that every uncountable subset of $\ddd$ is unbounded in $[\o_2]^{\le\omega}$
\end{proof}

For the remaining of this paper $\ddd$ will denote the directed set defined in the proof of Lemma \ref{existence}, thus $\ddd$ is a cofinal subset of $[\o_2]^{\le\o}$, the cardinality of $\ddd$ is $\aleph_2$, and every uncountable subset of $\ddd$ is unbounded. 

\begin{remark}\label{alternative}
 Note that since $[\o_2]^{\o}$ is cofinal in $[\o_2]^{\le\o}$ we have $$[\o_2]^{\omega}\equiv_T [\o_2]^{\le\omega}\equiv_T \ddd,$$ and so, depending on the situation, we will be using these three forms of the same cofinal type of a directed set $\ddd$.
\end{remark}

\section{Basic inequalities in $\mathcal D_{\aleph_2}$}

\begin{lemma}[see \cite{directed}]\label{poznate}
 There are Tukey maps:
 \begin{itemize}
  \item $f_0:1\to \o_1$;
  \item $f_1:1\to \o$;
  \item $f_2:\o\to \o\times\o_1$;
  \item $f_3:\o_1\to \o\times\o_1$;
  \item $f_4:\o\times\o_1\to [\o_1]^{<\o}$.
 \end{itemize}
\end{lemma}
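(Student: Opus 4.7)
The plan is to dispatch the five Tukey maps separately, with the first four being nearly tautological and $f_4$ carrying all the combinatorial content.

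For $f_0$ and $f_1$, the one-element directed set $1$ has no unbounded subsets, so the defining property of a Tukey map is vacuously satisfied by any function out of $1$; the constant zero map works in either case. For $f_2$ I would set $f_2(n)=(n,0)$: an unbounded (equivalently, infinite) subset of $\o$ has an image with unbounded first coordinate, which therefore cannot be bounded in $\o\times\o_1$. Symmetrically, for $f_3$ I would take $f_3(\a)=(0,\a)$, so that an unbounded (equivalently, cofinal) subset of $\o_1$ maps to a set cofinal in the second coordinate of the product.

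The main step is $f_4$. The plan is to encode both coordinates of $(n,\a)\in\o\times\o_1$ into a single finite subset of $\o_1$ via fixed countable enumerations. For each $\a<\o_1$ fix an injection $e_{\a}:\o\to\a+1$ with $e_{\a}(0)=\a$, and define
\[f_4(n,\a)=\set{e_{\a}(k):k\le n}.\]
A family $\mathcal A\sst[\o_1]^{<\o}$ is bounded if and only if $\bigcup \mathcal A$ is finite, so it suffices to show that $\bigcup f_4''X$ is infinite whenever $X\sst\o\times\o_1$ is unbounded. Unboundedness of $X$ means that either $\pi_{\o_1}''X$ is unbounded in $\o_1$ or $\pi_{\o}''X$ is infinite. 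In the first case, since $e_{\a}(0)=\a$ always belongs to $f_4(n,\a)$, the set $\bigcup f_4''X$ contains the uncountable set $\pi_{\o_1}''X$, so $f_4''X$ is unbounded. In the second case, if $\pi_{\o_1}''X$ is also infinite the same inclusion finishes the argument, while otherwise $\pi_{\o_1}''X$ is finite and $\pi_{\o}''X$ is infinite, and pigeonhole produces some $\a\in\pi_{\o_1}''X$ whose fiber $\set{n:(n,\a)\in X}$ is infinite; since $e_{\a}$ is injective, this forces $\bigcup f_4''X$ to contain infinitely many distinct values $e_{\a}(k)$.

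I do not anticipate any genuine obstacle. The only point that requires a little care is, in the argument for $f_4$, remembering that unboundedness in the product $\o\times\o_1$ demands only one of the two coordinate projections to be unbounded, which is what forces the case split above.
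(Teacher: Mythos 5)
Your proposal is correct in substance and essentially complete; the paper itself offers no argument here, simply citing \cite{directed}, so there is nothing to compare against beyond noting that your constructions for $f_0$--$f_3$ and the reduction of boundedness in $[\o_1]^{<\o}$ to finiteness of unions are exactly the standard ones. (An alternative route to $f_4$, closer in spirit to how the paper handles analogous facts at $\o_2$, is to observe that $\o\le_T[\o_1]^{<\o}$ and $\o_1\le_T[\o_1]^{<\o}$ and then invoke the least-upper-bound property of finite products, i.e.\ Lemma \ref{upperbound}; your direct encoding buys an explicit witness but proves nothing more.)

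One small slip should be repaired: for finite $\a$ there is no injection $e_{\a}:\o\to\a+1$, so as written $f_4(n,\a)$ is undefined on $\o\times\o$. The codomain restriction to $\a+1$ plays no role in your argument --- you only use that $e_{\a}$ is injective and that $e_{\a}(0)=\a$ --- so simply take $e_{\a}:\o\to\o_1$ to be any injection with $e_{\a}(0)=\a$, and the rest of your proof (the uncountability of $\bigcup f_4''X$ when $\pi_{\o_1}''X$ is unbounded, and the pigeonhole argument producing an infinite fiber when $\pi_{\o_1}''X$ is finite but $\pi_{\o}''X$ is infinite) goes through verbatim.
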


\begin{lemma}\label{ocigledne}
 There are Tukey maps:
 \begin{itemize}
  \item $f_5:1\to \o_2$;
  \item $f_6:\o_2\to \o\times\o_2$;
  \item $f_7:\o_2\to \o_1\times\o_2$;
  \item $f_8:\o\time\o_1\to \o_1\times\o_2$;
  \item $f_9:[\o_1]^{<\o}\to \o_2\times [\o_1]^{<\o}$;
  \item $f_{10}:\o\to\o\times\o_2$;
  \item $f_{11}:\o\times\o_1\times\o_2\to \o_2\times [\o_1]^{<\o}$;
  \item $f_{12}:\o\times\o_2\to \o\times\o_1\times\o_2$;
  \item $f_{13}:\o\times\o_1\to \o\times \o_1\times\o_2$;
  \item $f_{14}:\o_1\times\o_2\to \o\times\o_1\times\o_2$;
  \item $f_{15}:[\o_1]^{<\o}\times [\o_2]^{\le \o}\to [\o_2]^{<\o}$.
 \end{itemize}
\end{lemma}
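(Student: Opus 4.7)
The plan is to partition the eleven required Tukey maps into three tiers of difficulty: routine coordinate embeddings for most of the list, one composition with a map from Lemma \ref{poznate} for $f_{11}$, and a genuinely non-trivial map for $f_{15}$ that relies on the cofinal subset $\ddd$ of $[\o_2]^{\le\o}$ furnished by Lemma \ref{existence}.

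For the coordinate embeddings $f_5, f_6, f_7, f_8, f_9, f_{10}, f_{12}, f_{13}, f_{14}$, the source in each case embeds into a single coordinate slot of the target product while the other coordinate(s) are held at a fixed value. Typical examples are $f_6(\a)=(0,\a)$, $f_9(A)=(0,A)$, $f_{12}(n,\g)=(n,0,\g)$, and $f_{14}(\b,\g)=(0,\b,\g)$, and the remaining maps follow the same pattern. Each such map is Tukey because any upper bound for the image, projected to the nontrivial coordinate(s), yields an upper bound for the preimage in the source, so preimages of bounded sets are bounded. The map $f_{11}:\o\times\o_1\times\o_2\to\o_2\times[\o_1]^{<\o}$ is obtained by combining this idea with $f_4$ from Lemma \ref{poznate}: set $f_{11}(n,\b,\g)=(\g,f_4(n,\b))$. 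An unbounded $X\sst\o\times\o_1\times\o_2$ has either its projection to $\o\times\o_1$ or its projection to $\o_2$ unbounded; in the former case the Tukey property of $f_4$ forces the second coordinate of $f_{11}''X$ to be unbounded in $[\o_1]^{<\o}$, while in the latter case the first coordinate of $f_{11}''X$ is unbounded in $\o_2$. Either way, $f_{11}''X$ is unbounded in $\o_2\times[\o_1]^{<\o}$.

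The hard step is $f_{15}:[\o_1]^{<\o}\times[\o_2]^{\le\o}\to[\o_2]^{<\o}$, whose target is the maximal cofinal type of cardinality $\aleph_2$. By Lemma \ref{upperbound} it is enough to construct separate Tukey maps $[\o_1]^{<\o}\to[\o_2]^{<\o}$ and $[\o_2]^{\le\o}\to[\o_2]^{<\o}$. For the first I would take the set-theoretic inclusion: any $F\in[\o_2]^{<\o}$ bounding a family $X\sst[\o_1]^{<\o}$ has $F\cap\o_1\in[\o_1]^{<\o}$ still bounding $X$ there. For the second, using $[\o_2]^{\le\o}\equiv_T\ddd$ from Remark \ref{alternative}, I would enumerate $\ddd=\set{A_\a:\a<\o_2}$ and send $A_\a$ to $\{\a\}$. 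Since every unbounded subset of $\ddd$ is in particular infinite, its image is an infinite collection of pairwise distinct singletons, and no finite subset of $\o_2$ can contain infinitely many such singletons, so the image is unbounded in $[\o_2]^{<\o}$. Combining these via Lemma \ref{upperbound} yields $f_{15}$.
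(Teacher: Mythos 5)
Your proposal is correct and takes essentially the same route as the paper: the paper dispatches items (5)--(10) and (12)--(14) by citing the fact that each factor of a finite product is Tukey below that product, handles (11) through Lemma \ref{upperbound}, and declares (15) obvious; you simply exhibit the witnessing coordinate embeddings and spell out (11) and (15) explicitly. Your decomposition of $f_{15}$ --- the inclusion $[\o_1]^{<\o}\sst[\o_2]^{<\o}$ for the first factor, the singleton map on an injective enumeration of $\ddd$ for the second, then Lemma \ref{upperbound} --- is exactly the standard justification the paper leaves implicit, and your $f_{11}$ (composing with $f_4$) is an equivalent variant of the paper's appeal to $\o,\o_1,\o_2\le_T\o_2\times[\o_1]^{<\o}$ plus Lemma \ref{upperbound}. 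One caveat you should flag: the domain of $f_8$ is mistyped in the statement, and if it is read literally as $\o\times\o_1\to\o_1\times\o_2$ then your blanket claim that ``the source embeds into a single coordinate slot'' fails for it, and in fact no such Tukey map exists at all, since $\o\le_T\o\times\o_1$ while every countable subset of $\o_1\times\o_2$ is bounded, so $\o\not\le_T\o_1\times\o_2$; Figure \ref{slika} and the treatment of item (8) in Lemma \ref{reverse} show the intended map is $f_8:\o_1\to\o_1\times\o_2$, for which your coordinate embedding is fine.
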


\begin{proof}
 To see that inequality (11) is true, first note that by \cite[Proposition 2]{directed}, $\o,\o_1,\o_2\le_T\o_2\times [\o_1]^{\le\o}$. Now Lemma \ref{upperbound} implies that $\o\times\o_1\times\o_2\le_T \o_2\times [\o_1]^{<\o}$, i.e. there is a required Tukey map.
 Inequalities (5) and (15) are obvious, and inequalities (6)-(10) and (12-14) follow directly from \cite[Proposition 2]{directed}.
\end{proof}

\begin{lemma}\label{belowcountablesubsetsofomega2}
 There is a Tukey map $f_{16}:\o_1\times\o_2\to [\o_2]^{\le\o}$.
\end{lemma}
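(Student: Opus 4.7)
The plan is to reduce the statement to Lemma \ref{upperbound}. It suffices to exhibit Tukey maps $\o_1 \to [\o_2]^{\le\o}$ and $\o_2 \to [\o_2]^{\le\o}$; once those are in hand, Lemma \ref{upperbound} directly yields a Tukey map $f_{16} : \o_1 \times \o_2 \to [\o_2]^{\le\o}$, which is exactly what the lemma asks for.

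For both of the reductions above I would use the singleton map $\xi \mapsto \set{\xi}$, viewing $\o_1$ as an initial segment of $\o_2$. The underlying observation, already implicit in the proof of Lemma \ref{existence}, is that a subset $\mathcal F$ of $[\o_2]^{\le\o}$ is bounded if and only if $\bigcup \mathcal F$ is countable. Hence if $A \sst \o_i$ is unbounded (for $i\in\set{1,2}$), then $|A| \ge \cf(\o_i) \ge \aleph_1$, so $\bigcup\set{\set{\xi}:\xi\in A} = A$ is uncountable and therefore $\set{\set{\xi}:\xi\in A}$ is unbounded in $[\o_2]^{\le\o}$. This gives the two Tukey maps required by the plan, and the lemma follows.

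Alternatively, one can construct $f_{16}$ explicitly by setting $f_{16}(\xi,\b) = \set{\xi,\b}$, again viewing $\o_1$ as an initial segment of $\o_2$. If $A \sst \o_1 \times \o_2$ is unbounded, then at least one of its two coordinate projections is unbounded in the corresponding $\o_i$, hence uncountable, and since $\bigcup f_{16}''A$ contains both projections it must itself be uncountable and so unbounded in $[\o_2]^{\le\o}$. There is no substantive obstacle here; the only real content of the lemma is the characterization of boundedness in $[\o_2]^{\le\o}$ in terms of countability of unions, which is why this inequality is separated from the purely product-based ones collected in Lemma \ref{ocigledne}.
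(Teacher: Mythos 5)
Your proposal is correct; both of your routes work. The paper's own proof is also an explicit construction, but with a more elaborate map: it fixes a bijection $g:\o_1\times\o_2\to\o_2$ and sends $\seq{\a,\b}$ to $\set{g(\xi,\b):\xi<\a}$, an initial segment of the $\b$-th column, and then performs the same two-case split on which projection of an unbounded set is unbounded. The essential content is identical in all versions --- a family $\mathcal F\sst[\o_2]^{\le\o}$ is bounded iff $\bigcup\mathcal F$ is countable, and an unbounded $A\sst\o_1\times\o_2$ has a projection unbounded in the corresponding coordinate --- so what your versions buy is economy: with $\seq{\xi,\b}\mapsto\set{\xi,\b}$ the union of the image of $A$ is literally the union of its two projections, and your first route is precisely the mechanism (Lemma \ref{upperbound} applied to coordinatewise reductions) that the paper itself uses for inequalities (11) and (18)--(20) in Lemmas \ref{ocigledne} and \ref{top}. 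The bijection-based map gains nothing here and even has a harmless blemish that your map avoids: for $\a=0$ the paper's $f(\a,\b)$ is empty, so its second case (where a single $\a$ is fixed and $\b$ ranges over a set of size $\aleph_2$) really needs $\xi\le\a$ in place of $\xi<\a$, or needs to discard $\a=0$. In short: correct, a genuinely simpler packaging of the same idea.
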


\begin{proof}
 Fix a bijection $g:\o_1\times \o_2\to \o_2$, and let $f(\a,\b)=\set{g(\xi,\b):\xi<\a}$ for $\seq{\a,\b}\in\o_1\times\o_2$ .
 Note that $f(\a,\b)$ is a countable subset of $\o_2$ for each $\seq{\a,\b}\in\o_1\times\o_2$.
 To see that $f$ is a Tukey map, let $X$ be an unbounded subset of $\o_1\times\o_2$.
 Then, either $\pi_{\o_1}''X$ is unbounded in $\o_1$, or $\pi_{\o_2}''X$ is unbounded in $\o_2$.
 
 Suppose first that $\pi_{\o_1}''X$ is unbounded in $\o_1$, i.e. $\pi_{\o_1}''X$ is of cardinality $\aleph_1$.
 For each $\a\in \pi_{\o_1}''X$ choose $\b_{\a}$ such that $\seq{\a,\b_{\a}}\in X$.
 Since \[\textstyle\bigcup_{\a\in\pi_{\o_1}''X}\set{g(\xi,\b_{\a}):\xi<\a}\sst f''X,\] since $g$ is a bijection, and since $\pi_{\o_1}''X$ is of size $\aleph_1$, it must be that $\bigcup f''X$ is of cardinality at least $\aleph_1$, thus unbounded in $[\o_2]^{\le\o}$.
 
 Suppose now that $\pi_{\o_2}''X$ is unbounded in $\o_2$, i.e. $\pi_{\o_2}''X$ is of cardinality $\aleph_2$.
 Then there is $\a<\o_1$ such that $\Sigma=\set{\b<\o_2:\seq{\a,\b}\in X}$ is of cardinality $\aleph_2$, so since $g$ is bijection and
 \[
  \textstyle\bigcup_{\b\in \Sigma}\set{g(\xi,\b):\xi<\a}\sst f''X,
 \]
 it must be that $\bigcup f''X$ is of cardinality $\aleph_2$, thus unbounded in $[\o_2]^{\le\o}$.
\end{proof}

\begin{lemma}\label{top}
 There are Tukey maps:
 \begin{itemize}
  \item $f_{17}:[\o_2]^{\le\o}\to \o\times [\o_2]^{\le\o}$;
  \item $f_{18}:\o\times\o_1\times\o_2\to \o\times [\o_2]^{\le\o}$;
  \item $f_{19}:\o\times [\o_2]^{\le\o}\to [\o_1]^{<\o}\times [\o_2]^{\le\o}$;
  \item $f_{20}:\o_2\times [\o_1]^{<\o}\to [\o_1]^{<\o}\times [\o_2]^{\le\o}$.
 \end{itemize}
\end{lemma}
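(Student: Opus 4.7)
The plan is to obtain all four reductions in Lemma \ref{top} as direct applications of Lemma \ref{upperbound}, leaning on two ingredients already at hand: the general product-projection reduction $D,E\le_T D\times E$ from \cite[Proposition 2]{directed}, and the reductions collected in Lemmas \ref{poznate}, \ref{ocigledne}, and \ref{belowcountablesubsetsofomega2}. In each case I would check that every factor of the left-hand product is Tukey reducible to the right-hand product, and then invoke Lemma \ref{upperbound}.

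Concretely, $f_{17}$ is simply the projection reduction $[\o_2]^{\le\o}\le_T \o\times[\o_2]^{\le\o}$ and needs no further argument. For $f_{18}$ I would verify that each of $\o,\o_1,\o_2$ is Tukey below $\o\times[\o_2]^{\le\o}$: the factor $\o$ comes from the projection, while for $\o_1$ and $\o_2$ I would use the chain $\o_1,\o_2\le_T \o_1\times\o_2\le_T [\o_2]^{\le\o}\le_T \o\times[\o_2]^{\le\o}$, where the middle inequality is exactly Lemma \ref{belowcountablesubsetsofomega2}. For $f_{19}$ I would first note $[\o_2]^{\le\o}\le_T [\o_1]^{<\o}\times[\o_2]^{\le\o}$ by projection, and then establish $\o\le_T [\o_1]^{<\o}\times[\o_2]^{\le\o}$ by composing $\o\le_T [\o_1]^{<\o}$ (which is $f_4\circ f_2$ from Lemma \ref{poznate}) with the projection. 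For $f_{20}$ the factor $[\o_1]^{<\o}$ is again a projection, and the only new ingredient needed is $\o_2\le_T [\o_2]^{\le\o}$; this is witnessed by the map $\a\mapsto\set{\a}$, since any unbounded $X\sst\o_2$ has cardinality $\aleph_2$ and therefore cannot be contained in a single countable subset of $\o_2$, i.e., $\set{\set{\a}:\a\in X}$ is unbounded in $[\o_2]^{\le\o}$. Composing with the projection $[\o_2]^{\le\o}\le_T [\o_1]^{<\o}\times[\o_2]^{\le\o}$ finishes this step, and Lemma \ref{upperbound} closes each of the four cases.

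No substantive obstacle is anticipated; the argument is entirely formal once the preparatory lemmas are on the table. The only step that is not a pure projection is the appeal to Lemma \ref{belowcountablesubsetsofomega2} inside $f_{18}$, and it is worth double-checking that the composition of Tukey maps with product projections yields a Tukey map in the required product, which follows immediately by reading off an unbounded image coordinatewise.
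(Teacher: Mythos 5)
Your proposal is correct and follows essentially the same route as the paper: each of the four reductions is obtained from Lemma \ref{upperbound} together with the product projections of \cite[Proposition 2]{directed} and the earlier inequalities (in particular Lemma \ref{belowcountablesubsetsofomega2} for $f_{18}$, and $f_2,f_4$ for $f_{19}$). The only cosmetic difference is in $f_{20}$, where you verify $\o_2\le_T[\o_2]^{\le\o}$ directly via $\a\mapsto\set{\a}$ instead of factoring through $\o_2\le_T\o_1\times\o_2\le_T[\o_2]^{\le\o}$ as the paper does; both are fine.
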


\begin{proof}
 Inequality (17) follows from \cite[Proposition 2]{directed}.
 Inequality (18) follows Lemma \ref{upperbound} applied to inequality (16).
 Inequality (19) follows Lemma \ref{upperbound} applied to inequalities (2) and (4).
 Inequality (20) follows Lemma \ref{upperbound} applied to inequalities (7) and (16).
\end{proof}

\begin{lemma}\label{strictspecific}
 There is no Tukey map from $[\o_2]^{\le \o}$ to $\o_1\times\o_2$.
\end{lemma}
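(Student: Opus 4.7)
The plan is to argue by contradiction using the distinguished cofinal set $\ddd$ in place of $[\o_2]^{\le\o}$; by Remark \ref{alternative} these are cofinally equivalent, so a Tukey map from one exists if and only if one exists from the other. The crucial property I intend to exploit is the one isolated in Lemma \ref{existence}: every uncountable subset of $\ddd$ is unbounded in $[\o_2]^{\le\o}$.

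So suppose, for contradiction, that $f:\ddd\to\o_1\times\o_2$ is a Tukey map. I partition $\ddd$ according to the first coordinate of the image: for $\a<\o_1$ put
\[A_\a=\set{d\in\ddd:\pi_{\o_1}(f(d))=\a}.\]
Since $\abs{\ddd}=\aleph_2$ and the partition has only $\aleph_1$ pieces, a cardinality count yields some $\a^*<\o_1$ with $\abs{A_{\a^*}}=\aleph_2$. I then pick any $X\sst A_{\a^*}$ with $\abs{X}=\aleph_1$. Since $X$ is uncountable, Lemma \ref{existence} guarantees that $X$ is unbounded in $\ddd$, and therefore, as $f$ is Tukey, $f''X$ must be unbounded in $\o_1\times\o_2$; equivalently, at least one of the projections $\pi_{\o_1}''(f''X)$ and $\pi_{\o_2}''(f''X)$ is unbounded in the respective coordinate.

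Both alternatives fail. The first-coordinate projection is constantly $\a^*$, hence bounded by $\a^*$ in $\o_1$. The second-coordinate projection is a subset of $\o_2$ of cardinality at most $\abs{X}=\aleph_1<\cf(\o_2)=\o_2$, so it is bounded in $\o_2$. This contradicts the Tukey property of $f$ and completes the argument. There is no real obstacle to overcome here; the proof rests entirely on the clash between the ``diagonal'' way $\o_1\times\o_2$ realizes its cofinality (once one coordinate is pinned, any $\aleph_1$-sized image is bounded) and the robust width property of $\ddd$ (any uncountable subset is already unbounded).
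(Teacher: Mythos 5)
Your proof is correct. It turns on the same tension as the paper's: pigeonhole the first coordinate of the image to fix some $\a^*<\o_1$, and observe that an $\aleph_1$-sized subfamily of the corresponding fibre has bounded image in $\o_1\times\o_2$ (constant first coordinate, and only $\aleph_1<\cf(\o_2)$ many second coordinates), while being unbounded in the domain. The routes differ in the witnessing family and the case structure: the paper restricts $f$ to the singletons $[\o_2]^1$, whose uncountable subfamilies are unbounded in $[\o_2]^{\le\o}$ for the trivial reason that their unions are uncountable, and it runs a two-case analysis on the cardinality of $\pi_{\o_2}(f''[\o_2]^1)$; you instead pass to $\ddd$ via Remark \ref{alternative} and invoke the stronger property from Lemma \ref{existence}, which collapses everything into a single pigeonhole. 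Your version is marginally slicker, at the cost of importing Lemma \ref{existence} where the elementary observation about singletons would do. The only step worth spelling out is that Lemma \ref{existence} literally asserts unboundedness of $X$ in $[\o_2]^{\le\o}$, which transfers to unboundedness in $\ddd$ because any bound taken in $\ddd$ would also be a bound in $[\o_2]^{\le\o}$; this is immediate, so there is no gap.
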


\begin{proof}
 Suppose that $f:[\o_2]^{\le\o}\to \o_1\times\o_2$ is a Tukey map. 
 
 If $\abs{\pi_{\o_2}(f''[\o_2]^1)}\le \aleph_1$, then there is a set $Y\sst[\o_2]^1$ of cardinality $\aleph_2$, and an ordinal $\g<\o_2$ such that $\pi_{\o_2}(f(x))=\g$ for each $x$ in $Y$.
 Since $Y$ is of cardinality $\aleph_2$, there is a set $Z\sst Y$ of cardinality $\aleph_2$, and an ordinal $\b<\o_1$ such that $f(x)=\seq{\b,\g}$ for each $x$ in $Z$.
 So $f^{-1}(\b,\g)$ is unbounded in $[\o_2]^{\le\o}$, contradicting the fact that $f$ is a Tukey map.
 
 If $\abs{\pi_{\o_2}(f''[\o_2]^1)}=\aleph_2$, then there is a set $Y\sst \o_2$ of cardinality $\aleph_2$, and an ordinal $\b<\o_1$ such that $\seq{\b,\g}\in f''[\o_2]^1$ for each $\g$ in $Y$.
 Let $W$ be any subset of $\set{\b}\times Y$ of cardinality $\aleph_1$.
 The set $W$ is bounded in $\o_1\times\o_2$.
 Since $W$ is uncountable, the set $f^{-1}W$ is unbounded in $[\o_2]^{\le\o}$, again contradicting the assumption that $f$ is a Tukey map.
\end{proof}

\begin{lemma}\label{incomparableright}
 $\o\times\o_1\times\o_2$ and $[\o_2]^{\le\o}$ are incomparable cofinal types.
\end{lemma}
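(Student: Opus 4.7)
To prove Lemma \ref{incomparableright} I need to establish both non-reductions $\o\times\o_1\times\o_2\not\le_T[\o_2]^{\le\o}$ and $[\o_2]^{\le\o}\not\le_T\o\times\o_1\times\o_2$. The two directions rely on rather different features of the directed sets involved, so I will treat them separately.

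For the first direction I plan to exploit the $\s$-directedness of $[\o_2]^{\le\o}$: any countable family of countable subsets of $\o_2$ is bounded by its (countable) union. Consequently no Tukey map from $\o$ into $[\o_2]^{\le\o}$ can exist, because the image of $\o$ would be countable and hence bounded, while $\o$ is unbounded in itself. On the other hand, $n\mapsto\seq{n,0,0}$ is clearly Tukey from $\o$ into $\o\times\o_1\times\o_2$, since infinite subsets of its image are unbounded in the first coordinate. Composing this with any putative Tukey map $\o\times\o_1\times\o_2\to[\o_2]^{\le\o}$ would yield the forbidden map $\o\to[\o_2]^{\le\o}$, a contradiction.

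For the second direction I will suppose toward a contradiction that $f:[\o_2]^{\le\o}\to\o\times\o_1\times\o_2$ is Tukey and restrict attention to singletons, writing $f(\set{\a})=\seq{n_\a,\b_\a,\g_\a}$ for each $\a<\o_2$. Since $\abs{\o\times\o_1}=\aleph_1<\aleph_2$, pigeonhole yields a pair $\seq{n,\b}$ together with a set $Y\sst\o_2$ of cardinality $\aleph_2$ such that $\seq{n_\a,\b_\a}=\seq{n,\b}$ for every $\a\in Y$. Choosing any $Z\sst Y$ with $\abs{Z}=\aleph_1$, and using that $\cf(\o_2)=\o_2$, the ordinals $\set{\g_\a:\a\in Z}$ are bounded above by some $\g^*<\o_2$; therefore $f''\set{\set{\a}:\a\in Z}$ is bounded by $\seq{n,\b,\g^*}$ in $\o\times\o_1\times\o_2$. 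However $\set{\set{\a}:\a\in Z}$ has uncountable union $Z$ and so is unbounded in $[\o_2]^{\le\o}$, contradicting the Tukey property of $f$. The only mildly delicate point is keeping in mind that boundedness in $[\o_2]^{\le\o}$ is equivalent to having countable union; once this is internalised, the argument requires nothing beyond pigeonhole and the regularity of $\o_2$.
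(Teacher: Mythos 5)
Your proof is correct and follows essentially the same route as the paper: a countable unbounded set in $\o\times\o_1\times\o_2$ against the $\sigma$-directedness of $[\o_2]^{\le\o}$ for one direction, and a pigeonhole argument on the images of singletons for the other. The only difference is cosmetic: the paper splits the second direction into two cases according to whether $f''[\o_2]^1$ has cardinality $\aleph_2$, whereas you bypass the case distinction by pigeonholing on the first two coordinates and then bounding the third coordinates of an $\aleph_1$-sized subfamily directly via the regularity of $\o_2$.
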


\begin{proof}
 Since $\o\times\set{\o}\times\set{\o}$ is unbounded in $\o\times\o_1\times\o_2$, and every countable set in $[\o_2]^{\le\o}$ is bounded, there is no Tukey function from $\o\times \o_1\times\o_2$ into $[\o_2]^{\le\o}$.
 Thus $\o\times \o_1\times\o_2\nleq_T [\o_2]^{\le\o}$.
 So suppose that there is a Tukey map $f:[\o_2]^{\le\o}\to \o\times\o_1\times\o_2$.
 
 If $\abs{f''[\o_2]^1}\le\aleph_1$, then there is a set $Y\sst [\o_2]^1$ of cardinality $\aleph_2$, and a triple $\seq{\b,\g,\d}$ such that $f(x)=\seq{\b,\g,\d}$ for each $x$ in $Y$.
 Thus $f^{-1}(\b,\g\,\d)$ is unbounded in $[\o_2]^{\le\o}$, contradicting the assumption that $f$ is a Tukey function.
 
 If $\abs{f''[\o_2]^1}=\aleph_2$, then there is a set $Y\sst \o_2$ of cardinality $\aleph_2$, an integer $n<\o$, and an ordinal $\b<\o_1$ such that the set $Z=\set{n}\times\set{\b}\times Y$ is contained as a subset in $f''[\o_2]^1$.
 Let $W$ be any subset of $Z$ of cardinality $\aleph_1$.
 Thus $W$ is bounded in $\o\times\o_1\times\o_2$.
 Since $W$ is uncountable  $f^{-1}W$ is unbounded in $[\o_2]^{\le\o}$, again contradicting the assumption that $f$ is a Tukey function.
\end{proof}

\begin{lemma}\label{incomparableleft}
 $\o\times\o_1\times\o_2$ and $[\o_1]^{<\o}$ are incomparable cofinal types.
\end{lemma}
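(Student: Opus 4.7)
The plan is to prove the two non-reducibilities separately.

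For $\o\times\o_1\times\o_2\nleq_T [\o_1]^{<\o}$, I would exploit the cardinality mismatch between the chain $\set{0}\times\set{0}\times\o_2$, which is unbounded in $\o\times\o_1\times\o_2$ (cofinal in the last coordinate) and has cardinality $\aleph_2$, and the target $[\o_1]^{<\o}$, which has cardinality $\aleph_1$. Any map from this chain into $[\o_1]^{<\o}$ has a fiber of size $\aleph_2$ by pigeonhole. Such a fiber remains cofinal, hence unbounded, in the chain, while its image is a single element and therefore bounded, contradicting the Tukey property.

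For $[\o_1]^{<\o}\nleq_T \o\times\o_1\times\o_2$, I would assume a Tukey map $f:[\o_1]^{<\o}\to \o\times\o_1\times\o_2$ and analyze its values on singletons, writing $f(\set{\xi})=(n_\xi,\b_\xi,\g_\xi)$ for $\xi<\o_1$. The key observation is that a subset $X\sst [\o_1]^{<\o}$ is unbounded if and only if $\bigcup X$ is infinite, so for every countably infinite $A\sst\o_1$ the set $\set{\set{\xi}:\xi\in A}$ is unbounded in $[\o_1]^{<\o}$, forcing its image $\set{f(\set{\xi}):\xi\in A}$ to be unbounded in $\o\times\o_1\times\o_2$. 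Since $A$ is countable, the sets $\set{\b_\xi:\xi\in A}\sst \o_1$ and $\set{\g_\xi:\xi\in A}\sst\o_2$ are bounded by regularity of $\o_1$ and $\o_2$, so the unboundedness must come from the first coordinate: $\set{n_\xi:\xi\in A}$ is unbounded in $\o$.

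It then follows that for every $n<\o$ the set $\set{\xi<\o_1:n_\xi\le n}$ can contain no countably infinite subset and is therefore finite, so $\o_1=\bigcup_{n<\o}\set{\xi<\o_1:n_\xi\le n}$ is a countable union of finite sets, contradicting $\abs{\o_1}=\aleph_1$. I do not anticipate a real obstacle in either direction; the crucial step is recognizing that the ``unbounded iff infinite union'' characterization of $[\o_1]^{<\o}$ combined with the regularity of $\o_1$ and $\o_2$ confines the entire argument to the finitary coordinate $\o$ of the product.
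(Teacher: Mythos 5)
Your proposal is correct in both directions. The half $\o\times\o_1\times\o_2\nleq_T [\o_1]^{<\o}$ is essentially the paper's argument: a pigeonhole on the $\aleph_1$-sized target produces an $\aleph_2$-sized fiber whose image is a single (hence bounded) point while the fiber itself is unbounded, since its projection to $\o_2$ has size $\aleph_2$ and $\o_2$ is regular. For the other half the routes diverge. The paper observes that the image of any Tukey map $f:[\o_1]^{<\o}\to\o\times\o_1\times\o_2$ has size at most $\aleph_1$, so its $\o_2$-projection is always bounded; hence $\pi_{\o\times\o_1}\circ f$ would be a Tukey map into $\o\times\o_1$, and it then invokes the known fact from the $\mathcal D_{\aleph_1}$ analysis that $[\o_1]^{<\o}\nleq_T\o\times\o_1$. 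You instead push the regularity argument through both uncountable coordinates and finish by hand: on each countably infinite family of singletons the $\o_1$- and $\o_2$-coordinates are bounded by regularity, so the $\o$-coordinate must be unbounded, whence each set $\set{\xi:n_\xi\le n}$ is finite and $\o_1$ would be countable. This is in effect an inlined proof of the cited $\mathcal D_{\aleph_1}$ fact combined with the same regularity reduction, using the correct characterization that a subset of $[\o_1]^{<\o}$ is unbounded iff its union is infinite. The paper's version is shorter given the background literature; yours is self-contained and avoids any appeal to the earlier classification. Both are sound.
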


\begin{proof}
 Suppose first that there is a Tukey function $f$ from $[\o_1]^{<\o}$ into $\o\times\o_1\times\o_2$.
 For every unbounded set $X\sst [\o_1]^{<\o}$, $\pi_{\o_2}''\left(f''X\right)$ is bounded in $\o_2$, thus $\pi_{\o\times\o_1}''\left(f''X\right)$ is unbounded in $\o\times\o_1$.
 Hence $\pi_{\o\times\o_1}\circ f$ is a Tukey function from $[\o_1]^{<\o}$ into $\o\times\o_1$, which is impossible.
 
 Suppose now that $f:\o\times\o_1\times\o_2\to [\o_1]^{<\o}$ is a Tukey function.
 Since $[\o_1]^{<\o}$ is of size $\aleph_1$, there is a set $Y\sst \o_2$ of cardinality $\aleph_2$, an integer $n<\o$, an ordinal $\b<\o_1$, and a finite set $F\sst \o_1$ such that $f(n,\b,\g)=F$ for each $\g$ in $Y$.
 Since $F$ is bounded in $[\o_1]^{<\o}$ and $\set{n}\times\set{\b}\times Y$ is unbounded in $\o\times\o_1\times\o_2$, this contradicts the assumption that $f$ is a Tukey function.
\end{proof}

\begin{lemma}\label{topleft}
 There is no Tukey function from $[\o_2]^{\le\o}$ into $\o_2\times [\o_1]^{<\o}$. 
\end{lemma}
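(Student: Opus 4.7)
The plan is to mimic the two preceding lemmas (\ref{strictspecific} and \ref{incomparableright}) almost verbatim, exploiting the fact that $\o_2\times[\o_1]^{<\o}$ has only $\aleph_1$ different ``$[\o_1]^{<\o}$-coordinates'' above any fixed $\o_2$-coordinate, while $[\o_2]^{\le\o}$ is so large at the singleton level that an $\aleph_2$-sized preimage of any single point, or even an $\aleph_1$-sized preimage of a bounded set, is forced to be unbounded.

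Suppose for contradiction that $f:[\o_2]^{\le\o}\to \o_2\times[\o_1]^{<\o}$ is a Tukey map. Throughout I will use the basic fact that a subset $X\sst[\o_2]^{\le\o}$ is bounded iff $\bigcup X$ is countable, so any $X$ consisting of $\aleph_1$-many distinct singletons (or with $\bigcup X$ uncountable) is unbounded. I would now split on the size of $\pi_{\o_2}\bigl(f''[\o_2]^1\bigr)$, exactly as in Lemma \ref{strictspecific}.

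If $\bigl|\pi_{\o_2}(f''[\o_2]^1)\bigr|\le\aleph_1$, then by pigeonhole there is a fixed $\g<\o_2$ and a set $Y\sst[\o_2]^1$ of size $\aleph_2$ with $\pi_{\o_2}(f(x))=\g$ for every $x\in Y$. Since $\bigl|[\o_1]^{<\o}\bigr|=\aleph_1$, a second application of pigeonhole yields $F\in[\o_1]^{<\o}$ and $Z\sst Y$ of size $\aleph_2$ on which $f$ is constantly equal to $\seq{\g,F}$. But $\bigcup Z$ has cardinality $\aleph_2$, so $Z$ is unbounded in $[\o_2]^{\le\o}$, while $f''Z=\set{\seq{\g,F}}$ is trivially bounded in $\o_2\times[\o_1]^{<\o}$, contradicting that $f$ is Tukey.

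If instead $\bigl|\pi_{\o_2}(f''[\o_2]^1)\bigr|=\aleph_2$, then by pigeonhole on the $[\o_1]^{<\o}$-coordinate I can find a single $F\in[\o_1]^{<\o}$ and a set $Y\sst\o_2$ of size $\aleph_2$ such that for every $\g\in Y$ there is $\xi_\g<\o_2$ with $f(\set{\xi_\g})=\seq{\g,F}$. Fix any $W\sst Y$ of size $\aleph_1$. Then $\set{\seq{\g,F}:\g\in W}$ is bounded in $\o_2\times[\o_1]^{<\o}$ by $\seq{\sup W,F}$, whereas its preimage contains the $\aleph_1$-many distinct singletons $\set{\set{\xi_\g}:\g\in W}$, whose union is uncountable, hence unbounded in $[\o_2]^{\le\o}$. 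This again contradicts the Tukey property, completing the proof. There is no genuine obstacle here beyond the bookkeeping of the two pigeonholes; the only thing to be careful about is to apply the ``uncountable union $\Rightarrow$ unbounded'' criterion for $[\o_2]^{\le\o}$ at the right moment in each case.
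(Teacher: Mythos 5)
Your proposal is correct and follows essentially the same route as the paper's proof: split on whether $\pi_{\o_2}\bigl(f''[\o_2]^1\bigr)$ has size $\le\aleph_1$ or $\aleph_2$, pigeonhole on the $\aleph_1$-sized factor $[\o_1]^{<\o}$, and in each case exhibit a bounded subset of $\o_2\times[\o_1]^{<\o}$ whose preimage is unbounded in $[\o_2]^{\le\o}$. The only differences are cosmetic (you make the bound $\seq{\sup W,F}$ and the injectivity of $\g\mapsto\set{\xi_\g}$ explicit, which the paper leaves implicit).
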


\begin{proof}
 Assume that $f:[\o_2]^{\le\o}\to\o_2\times [\o_1]^{<\o}$ is a Tukey function.
 
 If $\abs{\pi_{\o_2}''(f''[\o_2]^1)}\le\aleph_1$, then there is a set $Y\sst [\o_2]^1$ of cardinality $\aleph_2$, and an ordinal $\b<\o_2$ such that $\pi_{\o_2}(f(x))=\b$ for each $x$ in $Y$.
 Since $[\o_1]^{<\o}$ is of size $\aleph_1$, there is a set $Z\sst Y$ of size $\aleph_2$, and a finite set $F\sst \o_1$ such that $f(x)=\seq{\b,F}$ for each $x$ in $Z$.
 Thus $f^{-1}(\b,F)$ is unbounded in $[\o_2]^{\le\o}$, contradicting the assumption that $f$ is a Tukey map.
 
 If $\abs{\pi_{\o_2}''(f''[\o_2]^1)}=\aleph_2$, then since $[\o_1]^{<\o}$ is of size $\aleph_1$, there is a set $Y\sst \o_2$ of size $\aleph_2$, and a finite set $F\subseteq \o_1$ such that the set $Z=Y\times \set{F}$ is a subset of $f''[\o_2]^1$.
 Let $W$ be any subset of $Z$ of size $\aleph_1$.
 Then $W$ is bounded in $\o_2\times [\o_1]^{<\o}$.
 Since $W$ is uncountable, $g^{-1}W$ is unbounded in $[\o_2]^{\le\o}$, again contradicting the assumption that $f$ is a Tukey map. 
\end{proof}

\begin{lemma}\label{topright}
 There is no Tukey function from $[\o_1]^{<\o}$ into $\o\times [\o_2]^{\le\o}$. 
\end{lemma}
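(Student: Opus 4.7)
The plan is to exploit an asymmetry between the two directed sets: bounded subsets of $[\o_1]^{<\o}$ are exactly the finite ones, while $\o\times [\o_2]^{\le\o}$ can bound any countable family whose first coordinates are bounded in $\o$. So a Tukey map from $[\o_1]^{<\o}$ would have to send each infinite set to an unbounded image, but countability of the domain restricted to singletons makes this impossible.

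More concretely, I would assume for contradiction that $f:[\o_1]^{<\o}\to \o\times [\o_2]^{\le\o}$ is a Tukey function, and look at $f$ on the uncountable set $\{\{\a\}:\a<\o_1\}$. Writing $f(\{\a\})=\seq{n_\a,A_\a}$ with $n_\a\in\o$ and $A_\a\in [\o_2]^{\le\o}$, apply the pigeonhole principle to find a single $n\in\o$ and an uncountable $Y\sst\o_1$ such that $n_\a=n$ for all $\a\in Y$. In particular $Y$ is infinite, so pick any countably infinite $Z\sst Y$.

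Now the set $X=\set{\set{\a}:\a\in Z}$ is an infinite subset of $[\o_1]^{<\o}$, and any infinite subset of $[\o_1]^{<\o}$ is unbounded there, since a bound $F\in [\o_1]^{<\o}$ would force $X\sst \cp(F)$, a finite set. On the other hand, $f''X=\set{\seq{n,A_\a}:\a\in Z}$ is bounded in $\o\times [\o_2]^{\le\o}$ by the pair $\seq{n,\bigcup_{\a\in Z}A_\a}$; this is a legitimate element of the product because a countable union of countable sets is countable, so $\bigcup_{\a\in Z}A_\a\in [\o_2]^{\le\o}$. This unbounded-to-bounded situation contradicts the assumption that $f$ is a Tukey function.

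The only thing to watch is that the conclusion really does follow from just one round of pigeonhole on a countable alphabet, rather than the two-case analysis used in Lemmas \ref{topleft}--\ref{incomparableright}; the reason no second case is needed is precisely that infinite $=$ unbounded in $[\o_1]^{<\o}$, so one does not have to worry whether the second-coordinate image is countable or has size $\aleph_1$. I do not expect any serious obstacle here.
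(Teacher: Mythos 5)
Your proof is correct and rests on essentially the same mechanism as the paper's: pigeonhole the first coordinate over $\o$ to fix a single $n$, then bound a countably infinite piece of the image by $\seq{n,\bigcup_{\a\in Z}A_\a}$, against the fact that every infinite subset of $[\o_1]^{<\o}$ is unbounded. By applying the pigeonhole directly to the uncountable family of singletons you collapse the paper's two-case analysis (countable versus uncountable image, the first case being handled there by composing with $\pi_{\o}$) into one step; this is a mild streamlining rather than a genuinely different argument.
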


\begin{proof}
 Suppose that there is a Tukey map $f:[\o_1]^{<\o}\to \o\times [\o_2]^{\le\o}$.
 If the set $f''([\o_1]^{<\o})$ is countable, then $\pi_{[\o_2]^{\le\o}}''\left(f''([\o_1]^{<\o})\right)$ is bounded in $[\o_2]^{\le\o}$.
 Thus for every unbounded $X\sst [\o_1]^{<\o}$, the set $\pi_{\o}''(f''X)$ is unbounded in $\o$.
 Then $\pi_{\o}\circ f$ would be a Tukey function from $[\o_1]^{<\o}$ into $\o$, which is impossible.
 Hence, the set $f''([\o_1]^{<\o})$ is uncountable.
 Then there is $n<\o$, and an infinite set $X\sst [\o_2]^{\le\o}$ such that $W=\set{n}\times X\sst f''([\o_1]^{<\o})$.
 Clearly, $W$ is bounded in $\o\times [\o_2]^{\le\o}$.
 Since $W$ is infinite, the set $f^{-1}W$ is unbounded in $[\o_1]^{<\o}$.
 This contradicts the assumption that $f$ is a Tukey function. 
\end{proof}

\begin{lemma}\label{topleftright}
 $\o_2\times [\o_1]^{<\o}$ and $\o\times [\o_2]^{\le\o}$ are Tukey incomparable cofinal types.
\end{lemma}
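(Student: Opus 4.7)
The plan is to reduce both non-reducibilities to the preceding lemmas by exploiting the fact that each of the two directed sets has, as one of its projection factors, a directed set already known to fail to embed in the other one. Specifically, I will use the trivial fact $A\le_T A\times B$ (realized either by the projection $A\times B\to A$, which is convergent, or by the Tukey map $a\mapsto (a,b_0)$ for any fixed $b_0\in B$), together with transitivity of $\le_T$.

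For the first direction, suppose toward a contradiction that $\o_2\times [\o_1]^{<\o}\le_T \o\times [\o_2]^{\le\o}$. Since $[\o_1]^{<\o}\le_T \o_2\times [\o_1]^{<\o}$, transitivity gives $[\o_1]^{<\o}\le_T \o\times [\o_2]^{\le\o}$, contradicting Lemma \ref{topright}.

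For the other direction, suppose $\o\times [\o_2]^{\le\o}\le_T \o_2\times [\o_1]^{<\o}$. Since $[\o_2]^{\le\o}\le_T \o\times [\o_2]^{\le\o}$, transitivity yields $[\o_2]^{\le\o}\le_T \o_2\times [\o_1]^{<\o}$, contradicting Lemma \ref{topleft}.

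There is no real obstacle here: the content of the lemma is entirely packaged into Lemmas \ref{topleft} and \ref{topright}, and the present statement only has to combine them with the obvious fact that a factor of a product is Tukey-below the product. The only thing to be slightly careful about is verifying that $A\le_T A\times B$ holds in the relevant generality, which follows immediately from the convergent-map characterization recalled in the Preliminaries.
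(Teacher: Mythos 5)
Your proof is correct and follows essentially the same route as the paper: both directions are reduced to Lemmas \ref{topright} and \ref{topleft} via the fact that each factor is Tukey below the product (which the paper cites from Proposition 2 of \cite{directed} and you verify directly) together with transitivity of $\le_T$.
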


\begin{proof}
 First notice that by Proposition 2 of \cite{directed}, if $\o_2\times [\o_1]^{<\o}\le_T \o\times [\o_2]^{\le\o}$, then $[\o_1]^{<\o}\le_T \o\times [\o_2]^{\le\o}$ which is impossible by Lemma \ref{topright}.
 On the other hand, if $\o\times [\o_2]^{\le\o}\le_T \o_2\times [\o_1]^{<\o}$, then $[\o_2]^{\le\o}\le_T \o_2\times [\o_1]^{<\o}$ contradicting Lemma \ref{topleft}.
\end{proof}

\begin{lemma}\label{reverse}
 All the Tukey inequalities (0)-(20) are strict. 
\end{lemma}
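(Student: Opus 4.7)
The plan is to establish strictness for each inequality by showing that the reverse Tukey reduction fails, grouping the 21 cases according to which tool settles them. Four tools suffice: (T1) Tukey reductions preserve cofinality, so $D\le_T E\sledi\cf(D)\le\cf(E)$, and moreover every $A\sst E$ of cardinality strictly less than $\cf(E)$ is bounded in $E$; (T2) Lemma \ref{strictgeneral}, which for Tukey-incomparable $D$ and $E$ gives $D\times E\not\le_T D, E$; (T3) the incomparability lemmas \ref{strictspecific}, \ref{incomparableright}, \ref{incomparableleft}, \ref{topleft}, \ref{topright} already proved in this section; and (T4) a direct pigeonhole argument tailored to inequality $(15)$.

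Tool T1, sometimes after partitioning the domain according to a finite coordinate, handles the reverses of $(0),(1),(2),(5),(6),(7),(8),(10),(12),(13)$. For instance, the reverse of $(12)$ reduces to showing $\o_1\not\le_T\o\times\o_2$: for a hypothetical Tukey map $f$, one has $\sup\pi_{\o_2}(f''\o_1)=\b_0<\o_2$ by cardinality, and then partitioning $\o_1=\bigsqcup_{n<\o}f^{-1}(\set{n}\times\o_2)$ regularity of $\o_1$ yields an uncountable fiber $A_n$ whose image sits in $\set{n}\times [0,\b_0]$ and is thus bounded, contradicting Tukey. Tool T2 then handles $(3),(9),(14),(17)$ once the necessary incomparabilities are in hand: $\o$ and $\o_1$ (by T1 both ways); $\o_2$ and $[\o_1]^{<\o}$ (since $\abs{[\o_1]^{<\o}}=\aleph_1<\cf(\o_2)$ one way, and the unbounded countable set $\set{\set{n}:n<\o}\sst[\o_1]^{<\o}$ would map to a bounded countable subset of $\o_2$ the other way); $\o$ and $\o_1\times\o_2$ (since every countable subset of $\o_1\times\o_2$ is bounded by regularity); and $\o$ and $[\o_2]^{\le\o}$ (since $[\o_2]^{\le\o}$ is $\aleph_1$-directed, every countable subset being bounded by its union).

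Tool T3 handles the upper-half cases. The reverse of $(16)$ is exactly Lemma \ref{strictspecific}. For $(11)$, a reduction $\o_2\times[\o_1]^{<\o}\to\o\times\o_1\times\o_2$ would give $[\o_1]^{<\o}\le_T\o\times\o_1\times\o_2$, contradicting Lemma \ref{incomparableleft}. The reverses of $(18),(19),(20)$ are analogous, invoking Lemmas \ref{incomparableright}, \ref{topright}, and \ref{topleft} respectively after contracting the product to one of its factors. The strictness $\o\times\o_1<_T[\o_1]^{<\o}$ needed for $(4)$ is the classical theorem of the second author on $\mathcal D_{\aleph_1}$ from \cite{stevo, directed}.

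The one case requiring a fresh argument is $(15)$, where I would show $[\o_2]^{<\o}\not\le_T[\o_1]^{<\o}\times[\o_2]^{\le\o}$. Suppose $f$ is such a Tukey map, and write $f(\set{\a})=(F_\a,A_\a)$ for $\a<\o_2$. Since $\abs{[\o_1]^{<\o}}=\aleph_1<\aleph_2$, pigeonhole yields some $F\in[\o_1]^{<\o}$ and some $I\sst\o_2$ of size $\aleph_2$ with $F_\a=F$ for every $\a\in I$. Pick any countably infinite $I'\sst I$. The set $\set{\set{\a}:\a\in I'}$ is unbounded in $[\o_2]^{<\o}$ (no finite subset of $\o_2$ contains all $\a\in I'$), while its $f$-image $\set{(F,A_\a):\a\in I'}$ is bounded by the single pair $(F,\bigcup_{\a\in I'}A_\a)\in[\o_1]^{<\o}\times[\o_2]^{\le\o}$, since $\bigcup_{\a\in I'}A_\a$ is a countable subset of $\o_2$. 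This contradicts the Tukey property of $f$. The combination of pigeonhole on $[\o_1]^{<\o}$ and $\aleph_1$-directedness of $[\o_2]^{\le\o}$ is the one genuinely new ingredient; everything else assembles cleanly from T1--T3.
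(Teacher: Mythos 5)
Your proposal is correct and follows essentially the same route as the paper: the reverses of the lower-diagram inequalities are dispatched by cofinality considerations and Lemma \ref{strictgeneral}, the upper-diagram ones by the incomparability Lemmas \ref{strictspecific}--\ref{topright}, and (15) by a pigeonhole on the $[\o_1]^{<\o}$-coordinate combined with countable directedness of $[\o_2]^{\le\o}$, which is exactly the paper's own two-case argument for (15) streamlined into one case. One small caution: the blanket claim in your (T1) that every $A\sst E$ with $\abs{A}<\cf(E)$ is bounded is false for products (e.g.\ $\o\times\set{0}$ is an unbounded countable subset of $\o\times\o_1$, which has cofinality $\aleph_1$); this is harmless here only because you apply it exclusively to regular ordinals or after first partitioning by the finite coordinate.
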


\begin{proof}
 First note that $\o,\o_1,\o_2$ are incomparable cofinal types, and that they are strictly above $1$, i.e. inequalities (0), (1), and (5) are strict.
 Lemma \ref{strictgeneral} implies that inequalities (2),(3),(6),(7),(8), and (10) are strict.
 For the same reason, and the fact that $\o\times\o_1,\o\times\o_2,\o_1\times\o_2$ are obviously incomparable, inequalities (12),(13), and (14) are strict.
 Lemma \ref{incomparableleft} implies that inequalities (9) and (11) are strict, and strictness of the inequality (4) is known since the introduction of the notion of cofinal equivalence (see \cite{directed}).
 By Lemma \ref{strictspecific} the inequality (16) is strict.
 Next, Lemma \ref{incomparableright} implies that the inequalities (17) and (18) are strict.
 Similarly, that the inequalities (19) and (20) are strict follows from Lemma \ref{topleftright}.
 
 We still have to prove that the inequality (15) is strict.
 Suppose that $f$ is a Tukey function  from $[\o_2]^{<\o}$ into $[\o_2]^{\le\o}\times [\o_1]^{<\o}$.
 
 If $f''[\o_2]^1$ is of size less then $\aleph_2$, then there is a set $Y\subseteq [\o_2]^1$ of cardinality $\aleph_2$, a countable set $S\sst \o_2$, and a finite set $F\sst \o_1$ such that $f(x)=\seq{S,F}$ for each $x$ in $[\o_2]^1$.
 Thus $f^{-1}(S,F)$ is unbounded in $[\o_2]^{<\o}$ contradicting the assumption that $f$ is a Tukey function.
 
 If $f''[\o_2]^1$ is of cardinality $\aleph_2$, then there is a finite $F\sst \o_1$ and an infinite set $Y\sst [\o_2]^{\le\o}$ such that $W=Y\times\set{F}\sst f''[\o_2]^1$.
 Then $W$ is bounded in $[\o_2]^{\le\o}\times [\o_1]^{<\o}$ while $f^{-1}W$ is unbounded in $[\o_2]^{<\o}$, contradicting the assumption that $f$ is a Tukey map.
\end{proof}




\section{Gaps in $\mathcal D_{\aleph_2}$}

\begin{theorem}
	There is no directed set $D$ such that $[\o_1]^{<\o} <_T D <_T \o_2\times [\o_1]^{<\o}$.
\end{theorem}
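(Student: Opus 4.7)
The plan is to show that any directed set $D$ with $[\o_1]^{<\o}\le_T D\le_T \o_2\times [\o_1]^{<\o}$ must in fact satisfy $D\equiv_T [\o_1]^{<\o}$ or $D\equiv_T \o_2\times [\o_1]^{<\o}$, so that the assumption of a strict intermediate $D$ is immediately contradictory. Fix a convergent map $g:\o_2\times [\o_1]^{<\o}\to D$ witnessing $D\le_T \o_2\times [\o_1]^{<\o}$. The central device will be a monotone ordinal-valued invariant $d\mapsto \a_d$ attached to $g$. For each $d\in D$ set
\[
 A_d=\set{(\a,F)\in\o_2\times [\o_1]^{<\o}:\ g(\a',F')\ge d\text{ for all }(\a',F')\ge (\a,F)}.
\]
This set is nonempty by convergence of $g$ and upward closed in the product order, so the projection $\set{\a<\o_2:\exists F\ (\a,F)\in A_d}$ is upward closed in $\o_2$ and has a least element $\a_d$. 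The one fact about this invariant I will use is monotonicity: if $d_1\le d_2$ then $A_{d_2}\sst A_{d_1}$, and hence $\a_{d_1}\le \a_{d_2}$.

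The argument now splits according to whether $\set{\a_d:d\in D}$ is bounded in $\o_2$. If it is bounded by some $\a^*<\o_2$, then for each $d\in D$ I can pick $F_d\in [\o_1]^{<\o}$ with $(\a_d,F_d)\in A_d$; upward closure gives $(\a^*,F_d)\in A_d$, which says precisely that the section $F\mapsto g(\a^*,F)$ is a convergent map from $[\o_1]^{<\o}$ to $D$. Hence $D\le_T [\o_1]^{<\o}$, and together with the hypothesis $[\o_1]^{<\o}\le_T D$ this forces $D\equiv_T [\o_1]^{<\o}$, contradicting $[\o_1]^{<\o}<_T D$.

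If instead $\set{\a_d:d\in D}$ is unbounded in $\o_2$, I recursively choose $d_\xi\in D$ ($\xi<\o_2$) with $\a_{d_\xi}>\sup_{\eta<\xi}\a_{d_\eta}$, which is possible since $\cf(\o_2)=\o_2$ and the $\a_d$'s are cofinal. Then I claim $e:\o_2\to D$ given by $e(\xi)=d_\xi$ is a Tukey map. Indeed, for any $I\sst \o_2$ with $\abs{I}=\aleph_2$, the set $\set{\a_{d_\xi}:\xi\in I}$ is cofinal in $\o_2$; if $d^*\in D$ were an upper bound of $\set{d_\xi:\xi\in I}$, then by monotonicity every $\a_{d_\xi}$ ($\xi\in I$) would be $\le \a_{d^*}<\o_2$, a contradiction. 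Thus $\o_2\le_T D$, and combined with $[\o_1]^{<\o}\le_T D$, Lemma \ref{upperbound} yields $\o_2\times [\o_1]^{<\o}\le_T D$, hence $D\equiv_T \o_2\times [\o_1]^{<\o}$, contradicting $D<_T \o_2\times [\o_1]^{<\o}$.

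The main obstacle — and really the entire content of the argument — is isolating the right invariant of $g$. The monotone threshold $d\mapsto \a_d$ is what simultaneously powers both the horizontal slicing in the bounded case (a single $\a^*$ yields a convergent section) and the vertical lifting in the unbounded case (monotonicity transfers bounds in $D$ to bounds in $\o_2$). Once this invariant is in place, the bookkeeping in each case is short and the dichotomy is forced.
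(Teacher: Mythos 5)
Your proof is correct, but it takes a genuinely different route from the paper's. The paper argues entirely with Tukey maps: from $g:D\to\o_2\times[\o_1]^{<\o}$ it forms the fibers $D_\a=g^{-1}[\o_2\times\set{s_\a}]$, in each of which every set of size $\aleph_1$ is bounded, and from $f:[\o_1]^{<\o}\to D$ it gets cones $f(s_\xi)^{\uparrow}$ meeting suitably chosen fibers in sets of size $\aleph_2$; these are assembled into an explicit Tukey map $h:\o_2\times[\o_1]^{<\o}\to D$, so the paper always lands in the case $D\equiv_T\o_2\times[\o_1]^{<\o}$. You instead pass to the convergent-map characterization of $\le_T$ and attach to a convergent $g:\o_2\times[\o_1]^{<\o}\to D$ the monotone threshold $d\mapsto\a_d$, which yields the cleaner dichotomy that any $D\le_T\o_2\times[\o_1]^{<\o}$ satisfies either $D\le_T[\o_1]^{<\o}$ (bounded thresholds give a convergent section above a single $\a^*$) or $\o_2\le_T D$ (unbounded thresholds give a strictly increasing $\o_2$-sequence $\seq{d_\xi}$ whose bounded subsets would have bounded thresholds); each horn kills one of the two strict inequalities, and Lemma \ref{upperbound} finishes the second horn exactly as you say. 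Your argument is also more general than what is needed: the only properties of the factors you use are that $\o_2$ is a regular cardinal (well-orderedness for $\min$, regularity for the length-$\o_2$ recursion) and nothing about $[\o_1]^{<\o}$, so you have in effect shown that for any directed set $P$ and regular cardinal $\k$, $D\le_T\k\times P$ implies $D\le_T P$ or $\k\le_T D$. What the paper's approach buys in exchange is an explicit witnessing Tukey map without invoking the convergent-map reformulation. The steps you leave implicit (nonemptiness and upward closure of $A_d$, hence existence of $\a_d$; monotonicity; the recursion surviving limit stages below $\o_2$) all check out.
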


\begin{proof}
	Note that it is enough to prove that if $D$ is a directed set such that $[\o_1]^{<\o}<_T D \le_T \o_2\times [\o_1]^{<\o}$, then $D\equiv_T \o_2\times [\o_1]^{<\o}$.
	So suppose that $D$ be a directed set such that there are Tukey maps $f:[\o_1]^{<\o}\to D$ and $g:D\to \o_2\times [\o_1]^{<\o}$.
	Since $[\o_1]^{<\o}<_T D$, it must be that $D$ is of cofinality $\aleph_2$.
	For $s\in D$ let us denote $s^\uparrow=\set{x\in D: s\le x}$.
	Denote also $E=f''([\o_1]^{<\o})$.
	Since $f$ is a Tukey map, every infinite subset of $E$ is unbounded in $D$.
	Enumerate $[\o_1]^{<\o}=\set{s_{\xi}:\xi<\o_1}$.
	For $\a<\o_1$ consider sets $D_{\a}=g^{-1}[\o_2\times \set{s_{\a}}]$.
	Clearly $D=\bigcup_{\a<\o_1}D_{\a}$.
	Since $g$ is a Tukey map, for each $\a<\o_1$ if $X\sst D_{\a}$ is of cardinality $\aleph_1$, then $X$ is bounded in $D$.
	For each $\xi<\o_1$ pick $\a(\xi)$ such that $\a(\xi)>\a(\nu)$ for all $\nu<\xi$, and that $f(s_{\xi})^{\uparrow}\cap D_{\a(\xi)}$ is of cardinality $\aleph_2$.
	Let us enumerate $f(s_{\xi})^{\uparrow}\cap D_{\a(\xi)}=\set{d_{\xi\g}: \g<\o_2}$ for each $\xi<\o_1$.
	Now define $h:\o_2\times [\o_1]^{<\o}\to D$ so that $h(\g,s_{\xi})=d_{\xi\g}$.
	Clearly $h$ is well-defined, and we proceed to show that $h$ is a Tukey map.
	
	Suppose that $A\subset \o_2\times [\o_1]^{<\o}$ is unbounded.
	Then either there is an unbounded set $\Gamma\sst \o_2$ and $\xi<\o_1$ such that $\seq{\g,\xi}\in A$ whenever $\g\in \Gamma$, or there is an infinite set $\Sigma\subset\o_1$ such that for every $\xi\in\Sigma$ there is $\g\in \o_2$ such that $\seq{\g,s_{\xi}}\in A$.
	Assume first that there is such unbounded $\Gamma\sst \o_2$, i.e. $\Gamma$ is of cardinality $\aleph_2$ and $\Gamma\times\set{s_{\xi}}\sst A$ for a fixed $\xi<\o_1$.
	Then $h''A\cap D_{\a(\xi)}$ is of size $\aleph_2$, thus it is unbounded in $D$.
	Assume now that there is such infinite $\Sigma$, i.e. that $\Sigma\sst \pi''_{[\o_1]^{<\o}}A$.
	Then the set $\set{\xi<\o_1: h''A\cap D_{\a(\xi)}\cap f(s_{\xi})^{\uparrow}\neq\ps}$ is infinite, so if it were bounded by some $d\in D$, there would be an infinite subset of $E$ bounded by $d$ which is impossible.
	Thus $h''A$ is unbounded in this case also.
\end{proof}

\begin{theorem}
	There is no directed set $D$ such that $[\o_2]^{\le\o}<_T D<_T \o\times [\o_2]^{\le\o}$.
\end{theorem}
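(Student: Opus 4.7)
The plan is to parallel the reduction in the preceding theorem, but to exploit the fact that the ``extra'' factor here is merely $\o$, which lets one close the argument with a short application of Lemma~\ref{upperbound}. Assume $D$ is directed with $[\o_2]^{\le\o}<_T D\le_T \o\times [\o_2]^{\le\o}$. Since $[\o_2]^{\le\o}\le_T D$ is given, by Lemma~\ref{upperbound} it suffices to prove $\o\le_T D$; then $\o\times [\o_2]^{\le\o}\le_T D$, and combined with the hypothesis $D\le_T \o\times [\o_2]^{\le\o}$ this yields $D\equiv_T \o\times [\o_2]^{\le\o}$, contradicting the strict inequality $D<_T \o\times [\o_2]^{\le\o}$. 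So the entire task is to prove $\o\le_T D$.

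To establish $\o\le_T D$ I will argue by contradiction. First I will note the following equivalence, valid for any directed $D$: $\o\not\le_T D$ iff every countable subset of $D$ is bounded. Given a countable $\set{d_n:n<\o}\sst D$, use directedness to form an increasing chain of partial suprema $e_k\ge d_0,\dots,d_k$; if $\set{e_k:k<\o}$ were unbounded, then every infinite subchain would also be unbounded (an upper bound of any infinite subchain bounds the whole chain), so the map $n\mapsto e_n$ would be a Tukey embedding $\o\to D$. Hence, under $\o\not\le_T D$, every countable subset of $D$ has an upper bound. Fix now a Tukey map $g:D\to \o\times [\o_2]^{\le\o}$ and consider $h:=\pi_{[\o_2]^{\le\o}}\circ g:D\to [\o_2]^{\le\o}$. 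I will show $h$ is Tukey; that will give $D\le_T [\o_2]^{\le\o}$, contradicting $[\o_2]^{\le\o}<_T D$.

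To verify that $h$ is Tukey, take an unbounded $X\sst D$ and partition it along the first coordinate of $g$ as $X=\bigsqcup_{n<\o}X_n$, where $X_n:=\set{x\in X:\pi_\o(g(x))=n}$. If every $X_n$ were bounded by some $b_n\in D$, the countable set $\set{b_n:n<\o}$ would admit an upper bound $b\in D$ by our standing assumption, and $b$ would bound all of $X$---a contradiction. So some $X_n$ is unbounded, and then $g''X_n$ is unbounded in $\o\times [\o_2]^{\le\o}$ while lying inside the slice $\set{n}\times [\o_2]^{\le\o}$; this forces $h''X_n=\pi_{[\o_2]^{\le\o}}(g''X_n)$ to be unbounded in $[\o_2]^{\le\o}$. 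Since $h''X_n\sst h''X$, $h''X$ is unbounded as required.

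The only mildly delicate point in this approach is the equivalence between $\o\not\le_T D$ and ``every countable subset of $D$ is bounded''; once that is in place, everything else is a routine fiber-by-fiber analysis of $\pi_{[\o_2]^{\le\o}}\circ g$, followed by the one-line application of Lemma~\ref{upperbound}. I do not anticipate any further obstacle.
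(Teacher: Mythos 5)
Your proof is correct, but it is organized quite differently from the paper's. The paper proves the theorem by directly constructing a Tukey map from $\o\times\ddd$ into $D$: it fixes the set $E=f''\ddd$ (every uncountable subset of which is unbounded in $D$), partitions $D$ into the fibers $D_n=g^{-1}(\set{n}\times[\o_2]^{\le\o})$, refines these to the sets $D_n'$, and then runs a recursive construction of indices $n_0<n_1<\cdots$ and size-$\aleph_2$ sets $X_m\sst D_{n_m}'$ whose elements climb upward through the fibers; the resulting map $h(n,s_\xi)=d_{\xi n}$ is verified to be Tukey by a two-case analysis. You instead factor the problem through Lemma \ref{upperbound}: since $[\o_2]^{\le\o}\le_T D$ is already given, it suffices to show $\o\le_T D$, and you obtain this from the dichotomy ``either $\o\le_T D$ or every countable subset of $D$ is bounded'' together with the observation that in the second case the fiber decomposition makes $\pi_{[\o_2]^{\le\o}}\circ g$ a Tukey map into $[\o_2]^{\le\o}$, contradicting $[\o_2]^{\le\o}<_T D$. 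All the steps check out: the equivalence between $\o\not\le_T D$ and boundedness of countable sets is standard and your chain argument for it is fine, and the fiber argument correctly uses that an unbounded subset of a single slice $\set{n}\times[\o_2]^{\le\o}$ must have unbounded second projection. Your route is shorter, avoids the recursive construction entirely, and in fact proves something more general: for \emph{any} directed set $E$, there is nothing strictly Tukey-between $E$ and $\o\times E$, since $E<_T D\le_T\o\times E$ forces $\o\le_T D$ and hence $D\equiv_T\o\times E$. What the paper's argument buys in exchange is an explicit Tukey embedding of $\o\times\ddd$ into $D$ built from the combinatorial structure of $\ddd$, but for the statement at hand your argument is a genuine simplification.
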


\begin{proof}
	Suppose that $D$ is a directed set such that $[\o_2]^{\le\o}<_T D$, and that $D\le_T \o\times [\o_2]^{\le\o}$.
	Let, by Lemma \ref{existence}, $\ddd=\set{s_{\xi}:\xi<\o_2}$ be a cofinal subset of $[\o_2]^{\le\o}$ of size $\aleph_2$ such that every uncountable subset of $\ddd$ is unbounded.
	Note that it is enough to prove that $\o\times \ddd\le_T D$.
	Let $f$ be a Tukey function from $[\o_2]^{\le\o}$ to $D$.
	Note that then $E=f''\ddd$ is such that every uncountable subset of $E$ is unbounded in $D$.
	On the other hand, let $g$ be a Tukey function from $D$ to $\o\times [\o_2]^{\le\o}$.
	Denote $D_n=g^{-1}(\set{n}\times [\o_2]^{\le\o})$ for each $n<\o$.
	Then $D=\bigcup_{n<\o}D_n$.
	Now, for $k<\o$ define $$D_k'=D_k\setminus\set{d\in D: (\exists n<k)(\exists x\in D_n)\ d\le x}.$$
	Clearly, $\bigcup_{n<\o}D_n'$ is a partition of $D$, and $D_n'\sst D_n$ for each $n<\o$.
	Since $g$ is a Tukey function, for each $n<\o$ every countable subset of $D_n$ is bounded in $D$.
	Thus, for each $n<\o$ every countable subset of $D_n'$ is bounded in $D$.
	Since $E$ is of cardinality $\aleph_2$, there is $n_0$ such that $\abs{E\cap D_{n_0}'}=\aleph_2$.
	Denote $X_0=E\cap D_{n_0}'=\set{d_{\xi 0}:\xi<\o_2}$.
	For each $\xi<\o_2$ pick $d_{\xi 0}'\in D\setminus D_{n_0}'$ such that $d_{\xi 0}<d_{\xi 0}'$.
	Denote $X_0'=\set{d_{\xi 0}':\xi<\o_2}$.
	Now let $n_1$ be such that $\abs{X_0'\cap D_{n_1}'}=\aleph_2$.
	Such an $n_1$ exists because for no $k<\o$ is $\bigcup_{n<k}D_n'$ cofinal in $D$.
	Clearly $n_1>n_0$.
	Denote $X_1=X_1'\cap D_{n_1}'$ and enumerate $X_1=\set{d_{\xi 1}:\xi<\o_2}$.
	Now suppose that $n_0<\cdots<n_m$, and sets $X_0,\dots,X_m,X_0',\dots,X_m'$, and $D_{n_0}',\dots,D_{n_m}'$ have been defined.
	Let $X_m=\set{d_{\xi m}:\xi<\o_2}$.
	For each $\xi<\o_2$ pick $d_{\xi m}'\in D\setminus\bigcup_{n\le m}D_n'$ such that $d_{\xi m}<d_{\xi m}'$, and denote $X_m'=\set{d_{\xi m}':\xi<\o_2}$.
	As before, there is $n_{m+1}>n_m$ such that $\abs{X_m'\cap D_{n_{m+1}}'}=\aleph_2$.
	Denote $X_{m+1}=X_m'\cap D_{n_{m+1}}'=\set{d_{\xi m+1}:\xi<\o_2}$.
	Finally, define $h:\o\times \ddd\to D$ as follows: for $n<\o$ and $\xi<\o_2$ let $h(n,s_{\xi})=d_{n_{\xi}}$.
	
	We will prove that $h$ is a Tukey finction which will conclude the proof.
	Suppose that $Y\sst \o\times \ddd$ is unbounded.
	Then either $\pi_{\o}''Y$ is unbounded in $\o$ or $\pi_{\ddd}''Y$ is unbounded in $\ddd$.
	If $\pi_{\o}''Y$ is unbounded in $\o$, then the set $K=\set{n<\o: h''Y\cap D_n'\neq\ps}$ is infinite.
	If there were $d\in D$ such that $x\le d$ for all $x\in h''Y$, then there would be $m<\o$ such that $d\in D_m'$.
	But this is impossible because by the definition of $D_m'$ there is no $n>m$ and $x\in D_n'$ such that $x\le d$, and consequently the set $K$ cannot be infinite.
	Thus in this case $h''Y$ is unbounded.
	If $\pi_{\ddd}''Y$ is unbounded in $\ddd$, then for some $m<\o$ the set $h''Y\cap D_m'$ is uncountable.
	Suppose that $d\in D$ is such that $x\le d$ for each $x\in h''Y\cap D_m'$.
	But this is impossible because then there would be an uncountable subset of $E$ bunded by $d$.
	Thus $h''Y$ is unbounded in this case also, and so $h$ is a Tukey function from $\o\times \ddd$ to $D$.
\end{proof}

\section{Directed set between $\o_1\times\o_2$ and $[\o_2]^{\le\o}$}

In this section we prove that, under certain set theoretic assumptions, there is a directed set $D$ such that $\o_1\times\o_2<_T D<_T [\o_2]^{\le\o}$.
We will construct such a $D$ using an $\aleph_2$-Souslin tree.
Assume, in this entire section, that GCH holds and that there is a non-reflecting stationary subset of $S^2_0$.
By the work of Gregory in \cite{gregory}, this implies that there is an $\aleph_2$-Souslin tree.
So suppose that $T\sst \o_1^{<\o_2}$ is an $\aleph_2$-Souslin tree with the property that for each $t\in T$ and $\a<\o_1$, the node $t\conc\seq{\a}$ belongs to $T$.
Let $\le$ denote the order in the tree $T$, and let $T_{\a}$ denote the $\a$th level of the tree $T$.
For $X\sst T$, let $\widehat{X}$ be the downward closure in $T$ of the set $X$.
Finally, let $D_T$ be the set of all subsets $X$ of $T$ such that for each $t\in T$, the set $\set{\a<\o_1: t\conc\seq{\a}\in \widehat{X}}$ is non-stationary in $\o_1$.
We consider $D_T$ ordered by inclusion.
Since non-stationary sets in $\o_1$ form an ideal, $D_T$ is a directed set.

\begin{lemma}\label{denseinacone}
 Suppose that $X\sst T$ is of cardinality $\aleph_2$.
 Then there is an element $t$ of $T$ such that for each $t'\ge t$ there is $x\in X$ such that $x\ge t'$. 
\end{lemma}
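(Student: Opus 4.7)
The plan is to argue by contradiction, combining a minimal-element construction with the $\aleph_2$-Souslin property of $T$. Let $\widehat X$ denote the downward closure of $X$ and set $Y = T \setminus \widehat X$; then $\widehat X$ is downward closed, so $Y$ is upward closed. The negation of the conclusion says exactly that for every $t \in T$ there is some $t' \geq t$ with $t' \in Y$, i.e.\ $Y$ meets every cone in $T$.

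The key step is to consider the set $M$ of $\leq$-minimal elements of $Y$. Since the predecessors of any node form a well-ordered chain, every $y \in Y$ sits above some $m \in M$, so $Y = \bigcup_{m \in M} m^\uparrow$. Clearly $M$ is an antichain, and I claim it is in fact a maximal antichain: given any $t \in T$, use the density of $Y$ to pick $y \geq t$ in $Y$, and then $m \in M$ with $m \leq y$; since $t$ and $m$ both lie in the chain of predecessors of $y$, they are comparable. This is where both the (hypothesized) failure of the conclusion and the tree property (compatibility equals comparability) are genuinely used.

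By $\aleph_2$-Souslinity, $\abs{M} \leq \aleph_1$. Because $m^\uparrow \sst Y$ for each $m \in M$ and $X \sst \widehat X = T \setminus Y$, no element of $X$ lies above any $m \in M$; combined with maximality of $M$, every element of $X$ must lie strictly below some $m \in M$, so $X \sst \bigcup_{m \in M} m^\downarrow$. Each chain $m^\downarrow$ has length less than $\o_2$ and hence cardinality at most $\aleph_1$, giving $\abs{X} \leq \abs{M} \cdot \aleph_1 \leq \aleph_1$, which contradicts $\abs{X} = \aleph_2$. I expect the main obstacle to be establishing that $M$ is a \emph{maximal} antichain rather than merely an antichain; once that is in hand, the Souslin hypothesis yields the cardinality bound essentially for free.
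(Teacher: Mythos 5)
Your proof is correct, but it runs in the opposite direction from the paper's. The paper argues by contradiction via a direct transfinite construction: assuming $Y=T\setminus\widehat X$ meets every cone, it builds an antichain $\seq{t_\a:\a<\o_2}$ by, at stage $\b$, picking an element of $X$ above the supremum $\d$ of the levels used so far (possible since $\abs{X}=\aleph_2$ while $\abs{T\rest\d}\le\aleph_1$) and incomparable with all previous $t_\a$, then pushing it up into $Y$; this $\aleph_2$-sized antichain contradicts Souslinity. You instead take the canonical maximal antichain $M$ of minimal elements of the upward-closed dense set $Y$, use Souslinity once to get $\abs{M}\le\aleph_1$, observe that every element of $X$ lies strictly below some $m\in M$, and conclude $\abs{X}\le\abs{M}\cdot\aleph_1\le\aleph_1$, contradicting $\abs{X}=\aleph_2$. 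Both arguments rest on the same two facts --- no antichain of size $\aleph_2$, and the $\aleph_1$ bound on initial segments (you use that each $m^{\downarrow}$ has at most $\aleph_1$ predecessors, the paper that each $T\rest\d$ has size $\aleph_1$; for $T\sst\o_1^{<\o_2}$ these are the same observation) --- but yours avoids the transfinite recursion entirely and is arguably cleaner; the maximality of $M$, which you correctly flag as the crux, follows exactly as you say from density of $Y$ together with the linearity of predecessor chains. One could quibble that you should note $Y\neq\ps$ under the contradiction hypothesis so that $M\neq\ps$, but nothing in the argument actually needs this.
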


\begin{proof}
 Suppose the contrary: that for each $t\in T$, there is $t'\ge t$ such that every $x\ge t'$ does not belong to $X$.
 By induction we construct an antichain $\seq{t_{\a}:\a<\o_2}$ in $T$.
 By applying the assumption on the root $\ps$ we obtain $t_0$ such that every $x\ge t_0$ does not belong to $X$.
 Suppose now that we are in stage $\b<\o_2$, and that $t_{\a}$ has been chosen for each $\a<\b$ in such a way that $\seq{t_{\a}:\a<\b}$ is an antichain, and that for each $\a<\b$, every $x\ge t_{\a}$ does not belong to $X$.
 Let $\d$ be the supremum of levels of the nodes $t_{\a}$ ($\a<\b$).
 Then $T\rest\d$ is of cardinality $\aleph_1$.
 Thus, there is a $t\in X$ on a level above $\d$, and such that $\widehat{\set{t}}\cap \set{t_{\a}:\a<\b}=\ps$.
 By the assumption, there is $t_{\b}\ge t$ such that every $x\ge t_{\b}$ does not belong to $X$.
 Clearly, $\seq{t_{\a}:\a\le\b}$ is an antichain in $T$.
 This completes the definition of an antichain of size $\aleph_2$ in $T$, which is in contradiction with the assumption that $T$ is an $\aleph_2$-Souslin tree.
\end{proof}

\begin{corollary}\label{cardbound}
 Every element $X$ of $D_T$ is of cardinality at most $\aleph_1$.
 In particular, $D_T$ is of cardinality $\aleph_2$.
\end{corollary}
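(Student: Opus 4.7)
My plan is to prove the first (main) assertion by contraposition, using Lemma \ref{denseinacone} as the essential input, and then to get the cardinality of $D_T$ from the first assertion together with GCH and the trivial observation that singletons lie in $D_T$.

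For the first assertion, I would suppose toward a contradiction that $X \in D_T$ has cardinality $\aleph_2$. Lemma \ref{denseinacone} then provides some $t \in T$ such that every $t' \geq t$ is extended by some element of $X$. Applied to the immediate successors $t \conc \seq{\a}$ for $\a < \o_1$ (which lie in $T$ by our standing assumption on $T$), this forces $t \conc \seq{\a} \in \widehat{X}$ for \emph{every} $\a < \o_1$. Hence the set $\set{\a < \o_1 : t \conc \seq{\a} \in \widehat{X}}$ equals $\o_1$, in particular it is stationary, contradicting $X \in D_T$. So $|X| \le \aleph_1$.

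For the ``in particular'' clause, the upper bound follows from cardinal arithmetic: since $|T| = \aleph_2$ and each element of $D_T$ has cardinality at most $\aleph_1$, we have $|D_T| \le |[T]^{\le \aleph_1}| \le \aleph_2^{\aleph_1}$, and under GCH the Hausdorff formula gives $\aleph_2^{\aleph_1} = \aleph_1^{\aleph_1} \cdot \aleph_2 = 2^{\aleph_1} \cdot \aleph_2 = \aleph_2$. For the lower bound, I observe that for any $t \in T$ the singleton $\set{t}$ lies in $D_T$: its downward closure $\widehat{\set{t}}$ is a chain, so for each $s \in T$ the set $\set{\a < \o_1 : s \conc \seq{\a} \in \widehat{\set{t}}}$ has at most one element and is certainly non-stationary. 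Since $|T| = \aleph_2$, this yields $|D_T| \ge \aleph_2$.

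I do not foresee a substantive obstacle here: the only nontrivial step is the invocation of Lemma \ref{denseinacone}, which has already been proved, and the Souslin property of $T$ is used only indirectly through that lemma. The rest is bookkeeping with cardinal arithmetic under GCH.
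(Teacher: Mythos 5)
Your proof is correct and follows essentially the same route as the paper: assume $|X|=\aleph_2$, invoke Lemma \ref{denseinacone} to get a $t$ whose every immediate successor $t\conc\seq{\a}$ lies in $\widehat{X}$, and conclude that $\set{\a<\o_1:t\conc\seq{\a}\in\widehat{X}}=\o_1$ is stationary, contradicting $X\in D_T$. Your treatment of the ``in particular'' clause is just a more explicit version of the paper's one-line appeal to GCH (the paper's lower-bound observation that singletons lie in $D_T$ appears only later, in the proof of Lemma \ref{fourth}), so there is nothing further to add.
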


\begin{proof}
 Let $X\in D_T$, and suppose that $X$ is of cardinality $\aleph_2$.
 By Lemma \ref{denseinacone} there is a $t$ in $T$ such that for each $t'\ge t$ there is $x$ in $X$ such that $x\ge t'$.
 This means that $\set{\a<\o_1:t\conc\seq{\a}\in\widehat{X}}=\o_1$, in particular this set is non-stationary, contradicting the fact that $X$ is in $D_T$.
 Thus $X$ must be of cardinality less than $\aleph_2$.
 Now, the standing assumption of GCH in this section implies that $\abs{D_T}=\aleph_2$.
\end{proof}

\begin{corollary}\label{part}
 For every partition $D_T=\bigcup_{\g<\o_1}D_{\g}$, there is an ordinal $\g<\o_1$, and an unbounded $E\sst D_{\g}$ of size $\aleph_1$.
\end{corollary}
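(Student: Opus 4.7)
The plan is to show that some $D_\g$ in the partition is itself unbounded in $D_T$, and then to extract from it an $\aleph_1$-sized unbounded subset using the non-stationarity condition defining $D_T$ together with the fact that the non-stationary ideal on $\o_1$ is closed under countable unions.

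The critical step is ruling out the possibility that every $D_\g$ is bounded in $D_T$. Suppose toward a contradiction that each $D_\g$ is bounded, say by some $Z_\g \in D_T$. For every $t \in T$ the singleton $\set{t}$ belongs to $D_T$, because $\widehat{\set{t}}$ is just the chain from the root to $t$ and hence for each $s \in T$ the set $\set{\a<\o_1 : s\conc\seq{\a} \in \widehat{\set{t}}}$ has at most one element and is therefore non-stationary. Consequently $\set{t} \in D_\g$ for some $\g$, which forces $t \in Z_\g$, and we obtain $T = \bigcup_{\g<\o_1} Z_\g$. But Corollary \ref{cardbound} gives $\abs{Z_\g} \le \aleph_1$, so $\abs{T} \le \aleph_1$; this contradicts the fact that the $\aleph_2$-Souslin tree $T$ has cardinality $\aleph_2$. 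This cardinality step is the main obstacle in the proof.

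Having secured a $\g$ for which $D_\g$ is unbounded, equivalently $\bigcup D_\g \notin D_T$, I pick $t \in T$ such that $S = \set{\a<\o_1 : t\conc\seq{\a} \in \widehat{\bigcup D_\g}}$ is stationary in $\o_1$, and for each $\a \in S$ choose $X_\a \in D_\g$ with $t\conc\seq{\a} \in \widehat{X_\a}$; the desired set is $E = \set{X_\a : \a \in S}$. It remains to check that $\abs{E} = \aleph_1$ and that $E$ is unbounded in $D_T$. If $E$ were countable, then $S$ would be the countable union of the fibers $\set{\a \in S : X_\a = X}$ for $X \in E$, so by countable additivity of the non-stationary ideal some fiber would be stationary, forcing $\set{\a<\o_1 : t\conc\seq{\a} \in \widehat{X}}$ to be stationary and contradicting $X \in D_T$. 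If some $Z \in D_T$ bounded $E$, then $t\conc\seq{\a} \in \widehat{X_\a} \sst \widehat{Z}$ for every $\a \in S$, so $\set{\a<\o_1 : t\conc\seq{\a} \in \widehat{Z}}$ would contain the stationary set $S$, again contradicting $Z \in D_T$.
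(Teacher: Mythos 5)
Your proof is correct, and it reaches the key intermediate fact --- that some cell $D_\g$ of the partition is unbounded in $D_T$ --- by a genuinely different route than the paper. The paper uses GCH to get $\abs{D_T}=\aleph_2$, picks $\g$ with $\abs{D_\g}=\aleph_2$, and applies Lemma \ref{denseinacone} to $\bigcup D_\g$ to find a node $t$ with $\set{\a<\o_1:t\conc\seq{\a}\in\widehat{\bigcup D_\g}}=\o_1$. You instead observe that if every cell were bounded by some $Z_\g\in D_T$, then the singletons $\set{t}$ would force $T\sst\bigcup_{\g<\o_1}Z_\g$, a set of size at most $\aleph_1$ by Corollary \ref{cardbound}, contradicting $\abs{T}=\aleph_2$. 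Your counting argument buys two things: it does not use GCH (only the GCH-free half of Corollary \ref{cardbound} is needed), and it sidesteps the inference that $\abs{D_\g}=\aleph_2$ implies $\abs{\bigcup D_\g}=\aleph_2$, which is actually delicate --- under GCH a single cell could consist of $2^{\aleph_1}=\aleph_2$ many subsets of one chain of length $\o_1$ (all of which lie in $D_T$), with union of size only $\aleph_1$; such a cell is bounded, so one would have to pass to a different $\g$. The price of your route is that your $t$ only witnesses a stationary, rather than full, set of immediate successors landing in $\widehat{\bigcup D_\g}$, so you need countable completeness of the nonstationary ideal to see that $E$ is uncountable; but essentially the same verification is needed (and left implicit) in the paper's version, and your final unboundedness check coincides with the paper's.
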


\begin{proof}
 Recall that GCH is assumed in this section.
 Thus $\abs{D_T}=\aleph_2$.
 So there is a $\g<\o_1$ such that $\abs{D_{\g}}=\aleph_2$.
 Then $\abs{\bigcup D_{\g}}=\aleph_2$.
 By Lemma \ref{denseinacone}, there is a $t$ in $T$ such that for each $t'\ge t$ there is $x$ in $\bigcup D_{\g}$ such that $x\ge t'$.
 Again, this implies that $\set{\a<\o_1: t\conc\seq{\a}\in\widehat{\bigcup D_{\g}}}=\o_1$.
 Now for every $\a<\o_1$ pick $X_{\a}$ in $D_{\g}$ such that $t\conc\seq{\a}\in\widehat{X_{\a}}$.
 Let $E=\set{X_{\a}:\a<\o_1}$.
 Since any upper bound $Y$ for $E$ would have to contain the union of $E$ as a subset, we would have $\set{\a<\o_1:t\conc\seq{\a}\in\widehat{Y}}=\o_1$, in particular this set would be non-stationary.
 Thus $E$ is unbounded in $D_T$.
 Clearly $E$ is of cardinality $\aleph_1$ and a subset of $D_{\g}$.
\end{proof}

\begin{lemma}\label{fourth}
 If $D'$ is a cofinal subset of $D_T$, then there is an uncountable subset of $D'$ bounded in $D_T$.
\end{lemma}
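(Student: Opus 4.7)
I plan to construct, by recursion on $\a<\o_1$, an $\sst$-increasing sequence $\set{X_\a:\a<\o_1}$ in $D'$ whose union $X:=\bigcup_{\a<\o_1}X_\a$ belongs to $D_T$; such a sequence is then uncountable and bounded in $D_T$ by $X$.

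First I observe that $|D'|=\aleph_2$. For every $t\in T$ the singleton $\set{t}$ lies in $D_T$ because $c_s(\set{t})$ is either empty or a singleton for every $s\in T$, so by cofinality of $D'$ in $D_T$ some element of $D'$ contains $t$. Hence $\bigcup D'=T$, which has cardinality $\aleph_2$, and since every element of $D'$ has cardinality at most $\aleph_1$ by Corollary \ref{cardbound}, we must have $|D'|=\aleph_2$.

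The recursion itself is easy at each single step: at stage $\a$ the partial union $\bigcup_{\b<\a}X_\b$ is a countable union of elements of $D_T$ and therefore lies in $D_T$, because the non-stationary ideal on $\o_1$ is $\s$-complete; cofinality of $D'$ in $D_T$ then supplies some $X_\a\in D'$ with $X_\a\supseteq\bigcup_{\b<\a}X_\b$. The real content of the proof is to arrange the choice of these $X_\a$ so that $X\in D_T$.

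To verify that $X\in D_T$ I shall use Fodor's pressing-down lemma, in the spirit of the proof of Corollary \ref{part}. Suppose for the sake of contradiction that $X\notin D_T$; then for some $t\in T$ the set $c_t(X)=\bigcup_{\a<\o_1}c_t(X_\a)$ is stationary in $\o_1$, and for $\xi\in c_t(X)$ one may define $\a(\xi):=\min\set{\a<\o_1:\xi\in c_t(X_\a)}$. If $\a(\xi)<\xi$ holds on a stationary subset, then Fodor's lemma produces a stationary $S\sst\o_1$ and a single $\a^*<\o_1$ with $\a(\xi)=\a^*$ on $S$, whence $S\sst c_t(X_{\a^*})$ is stationary, contradicting $X_{\a^*}\in D_T$. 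The main obstacle will be to force this regressivity: at each stage $\a$ the element $X_\a\in D'$ above $\bigcup_{\b<\a}X_\b$ must be chosen so that for every $t\in T$ and every $\xi<\o_1$, if $\xi$ ever enters some $c_t(X_\a)$ then it does so at a stage below $\xi$. I intend to achieve this by an $\o_1$-length bookkeeping along a suitable enumeration, using the freedom in selecting from the cofinal family $D'$ at each stage and invoking, where necessary, the non-reflecting stationary subset of $S^2_0$ underlying Gregory's construction of the $\aleph_2$-Souslin tree $T$.
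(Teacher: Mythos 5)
Your reduction of the problem is sound up to a point: an increasing $\o_1$-chain in $D'$ can indeed be built stage by stage, since at each countable stage the partial union is a countable union of members of $D_T$ and hence lies in $D_T$ by $\s$-completeness of the non-stationary ideal, and $D'$ is cofinal. You also correctly identify the crux: a union of $\o_1$ many members of $D_T$ need not be in $D_T$, and the entire content of the lemma is in arranging that it is. But at exactly this point the proposal stops being a proof. The regressivity you need for Fodor --- that whenever $\xi$ first enters the set $\set{\nu<\o_1:t\conc\seq{\nu}\in\widehat{X_{\a}}}$ it does so at a stage $\a<\xi$ --- is not something you can simply ``arrange by bookkeeping.'' At stage $\a$ your only freedom is to pick \emph{some} element of $D'$ above $\bigcup_{\b<\a}X_{\b}$, and since $D'$ is an arbitrary cofinal family, every eligible element may introduce new ordinals $\xi\le\a$ into these sets for various $t$; nothing in your sketch rules this out, and no mechanism is given for how the non-reflecting stationary set or an enumeration would prevent it. The closing sentence of your proposal is a placeholder for the whole difficulty of the lemma rather than an argument.

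For comparison, the paper does not build a chain at all. It fixes a level set $\set{t_{\g}:\g<\o_2}$, picks $S_{\g}\in D'$ with $t_{\g}\in S_{\g}$, and works inside an elementary submodel $M\prec H(\theta)$ of size $\aleph_1$ with $M^{\aleph_0}\sst M$, setting $\d=M\cap\o_2$. It then chooses ordinals $\d_{\xi}$ cofinal in $\d$ so that the clubs witnessing $S_{\d_{\xi}}\in D_T$ agree with the corresponding clubs for $S_{\d}$ on longer and longer initial segments $\eta_{\xi}$; this coherence is what allows a diagonal intersection of clubs to witness that $\bigcup_{\xi<\o_1}S_{\d_{\xi}}$ belongs to $D_T$. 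Some device of this kind (or a genuine substitute for it) is indispensable, and your proposal contains none; as it stands it is an accurate statement of the problem, not a solution.
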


\begin{proof}
 Let $D'$ be a cofinal subset of $D_T$, and let $\set{t_{\g}:\g<\o_2}$ be a level set in $T$.
 Note that $L\sst T$ is a level set if $\abs{L\cap T_{\g}}=1$ for each $\g<\o_2$.
 Clearly, for every $\g<\o_2$, $\set{t_{\g}}\in D_T$.
 Thus, for each $\g<\o_2$ we can choose $S_{\g}\in D'$ such that $t_{\g}\in S_{\g}$.
 Let, for each $X\in D_T$ and $t\in T$, $N^X_t$ denote the non-stationary set $\set{\a<\o_1:t\conc \seq{\a}\in\widehat{X}}$, and let $C^X_t$ denote the club in $\o_1$ disjoint with $N^X_t$.
 
 Now fix a sufficiently large regular cardinal $\theta$, and an elementary submodel $M\prec H(\theta)$ of cardinality $\aleph_1$ containing all the relevant objects and such that $M^{\aleph_0}\sst M$.
 Denote $\d=M\cap \o_2$.
 Then $\d$ is a limit ordinal which belongs to $\o_2$, so let $\seq{\g_{\xi}:\xi<\o_1}$ be an increasing sequence in $\d$ such that $\sup_{\xi<\o_1}\g_{\xi}=\d$.
 Enumerate $T\rest\d=\set{s_{\xi}: \xi<\o_1}$ in such a way that if $s_{\xi_1}\le s_{\xi_2}$, then $\xi_1\le \xi_2$.
 In order to simplify notation, let $C^{\g}_{\xi}$ denote the set $C^{S_{\g}}_{s_{\xi}}$ for each $\g<\o_2$ and $\xi<\o_1$.
 
 Now, by induction, we construct three sequences $\seq{\d_{\xi}:\xi<\o_1}$, $\seq{\G_{\xi}:\xi<\o_1}$, and $\seq{\eta_{\xi}:\xi<\o_1}$ with the following properties:
 \begin{enumerate}
  \item\label{treci} $C^{\d}_{\xi_1}\cap\eta_{\xi_2}=C^{\d_{\xi_2}}_{\xi_1}\cap \eta_{\xi_2}$ for $\xi_1\le\xi_2<\o_1$,
  \item\label{drugi} $\d_{\xi_1}<\d_{\xi_2}<\d$ for $\xi_1<\xi_2<\o_1$,
  \item\label{poslednji} $\sup\set{\d_{\xi}:\xi<\o_1}=\d$.
 \end{enumerate}

 First consider $s_0$.
 Let $\eta_0$ be the first limit point of $C^{\d}_{0}$.
 Let
 \[
  \G_0=\set{\g<\o_2: C^{\d}_{0}\cap \eta_0=C^{\g}_{0}\cap \eta_0}.
 \]
 Since $\d\in \G_0$, the set $\G_0$ is stationary in $\o_2$.
 Let $\d_0=\min\G_0$.
 
 Suppose now that $\xi_0<\o_1$, and that $\d_{\xi},\G_{\xi}$, and $\eta_{\xi}$ have been constructed for each $\xi<\xi_0$.
 Let $\eta_{\xi_0}$ be the first limit point of $C^{\d}_{\xi_0}\setminus\sup\set{\eta_{\xi}:\xi<\xi_0}$.
 Consider the set
 \[
  \G_{\xi_0}=\set{\g\in \bigcap_{\xi<\xi_0}\G_{\xi}:(\forall \xi\le\xi_0)\ C^{\d}_{\xi}\cap\eta_{\xi_0}=C^{\g}_{\xi}\cap\eta_{\xi_0}}.
 \]
 Since $\G_{\xi_0}$ belongs to $M$, and since $\d\in \G_{\xi_0}$, it must be that $\G_{\xi_0}$ is stationary in $\o_2$.
 Since $\G_{\xi_0}$ is cofinal in $\o_2$ and belongs to $M$, the set $\d\cap \G_{\xi_0}$ is cofinal in $\d$.
 Define $\d_{\xi_0}$ to be the minimal ordinal in $\d\cap \G_{\xi_0}$ greater then both $\sup_{\xi<\xi_0}\d_{\xi}$ and $\g_{\xi_0}$ (recall that $\seq{\g_{\xi}:\xi<\o_1}$ is chosen to be cofinal in $\d$).
 It is clear from the construction that conditions (\ref{treci}-\ref{poslednji}) are satisfied. 
 
 Now we prove that $\set{S_{\d_{\xi}}:\xi<\o_1}$ is as required.
 It is sufficient to prove that $S=\bigcup_{\xi<\o_1}S_{\d_{\xi}}\in D_T$, as it will witness that $\set{S_{\d_{\xi}}:\xi<\o_1}$ is an uncountable subset of $D'$ bounded in $D_T$.
 Since, for each $\xi<\o_1$, both $\d_{\xi}$ and $\seq{S_{\g}:\g<\o_2}$ belong to $M$, it must be that $S_{\d_{\xi}}\in M$.
 Since $\abs{S_{\d_{\xi}}}\le \aleph_1$ ($\xi<\o_1$), we have $S\sst M$.
 Thus $S\sst T\rest\d$.
 This means that, in order to prove $S\in D_T$, it is enough to prove that for each $t\in T\rest\d$, the set $\set{\a<\o_1:t\conc\seq{\a}\in\widehat{S}}$ is non-stationary in $\o_1$.
 So take any $t\in T\rest\d$.
 Let $\zeta<\o_1$ be such that $s_{\zeta}=t$.
 Define
 \[\textstyle
  C=C^{\d}_{\zeta}\cap\left(\bigcap_{\xi\le\zeta}C^{\d_{\xi}}_{\zeta}\right)\cap\left(\Delta_{\xi<\o_1}C^{\d_{\xi}}_{\zeta}\right).
 \]
 Since countable intersection of clubs is a club, and since diagonal intersection of $\o_1$ many clubs is a club, we know that $C$ is a club in $\o_1$.
 We will prove that $C\cap \set{\a<\o_1:s_{\zeta}\conc\seq{\a}\in\widehat{S}}=\ps$.
 So suppose that an ordinal $\a<\o_1$ is such that $\a\in C\cap \set{\a<\o_1:s_{\zeta}\conc\seq{\a}\in\widehat{S}}$.
 This means that $\a\in C$ and that for some $\mu<\o_1$ and some $x\in S_{\d_{\mu}}$, $s_{\zeta}\conc\seq{\a}<x$.
 Note that this implies that
 \begin{equation}\label{assumption}
  \a\notin C^{\d_{\mu}}_{\zeta}.
 \end{equation}
 If $\mu\le\zeta$, then since $\a\in\bigcap_{\xi\le\zeta}C^{\d_{\xi}}_{\zeta}$, we have $\a\in C^{\d_{\mu}}_{\zeta}$ which is clearly contradicting (\ref{assumption}).
 Thus, it must be that $\zeta<\mu$.
 We consider two subcases, either $\a<\eta_{\mu}$ or $\a\ge\eta_{\mu}$.
 If $\a\ge\eta_{\mu}>\mu$, then $\a\in C$ implies that $\a\in\Delta_{\xi<\o_1}C^{\d_{\xi}}_{\zeta}$, which together with $a>\mu$ implies that $a\in C^{\d_{\mu}}_{\zeta}$.
 But this is in contradiction with (\ref{assumption}).
 If $\a<\eta_{\mu}$, then by the property of $\d_{\mu}$ we have $C^{\d}_{\zeta}\cap\eta_{\mu}=C^{\d_{\mu}}_{\zeta}\cap\eta_{\mu}$.
 By (\ref{assumption}), $\a\notin C^{\d_{\mu}}_{\zeta}$, so since $\a<\eta_{\mu}$, it must be that $\a\notin C^{\d}_{\zeta}$.
 But, by the definition of the set $C$, this means that $\a\notin C$, which is in contradiction with the initial assumption on $\a$.
\end{proof}

\begin{theorem}
 If $T\sst \o_1^{<\o_2}$ is an $\aleph_1$-branching $\aleph_2$-Souslin tree, then \[\o_1\times\o_2<_T D_T<_T [\o_2]^{\le\o}.\]
\end{theorem}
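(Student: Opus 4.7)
The plan is to prove $\o_1\times\o_2 \le_T D_T \le_T [\o_2]^{\le\o}$ and then verify that each inequality is strict, with strictness of the upper one the most delicate step.

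For $\o_1\times\o_2\le_T D_T$, Lemma \ref{upperbound} reduces the task to producing Tukey maps $\o_1\to D_T$ and $\o_2\to D_T$. I would use $\alpha\mapsto \{\seq{\beta}:\beta<\alpha\}$ for the former, which lies in $D_T$ because $\alpha$ is a countable (hence non-stationary) subset of $\o_1$, and is Tukey because any upper bound $Y$ of the image of an uncountable $A\sst \o_1$ would have to contain the entire first level above the root, forcing the successor set of the root inside $\widehat Y$ to be all of $\o_1$. For the latter I would use $\gamma\mapsto \{t_\gamma\}$ for any fixed level set, with Tukey-ness following immediately from Lemma \ref{denseinacone}. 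For strictness, assume $h:D_T\to \o_1\times\o_2$ is Tukey and partition $D_T=\bigcup_{\gamma<\o_1} h^{-1}(\{\gamma\}\times\o_2)$. By Corollary \ref{part}, some piece contains an unbounded set $E$ of cardinality $\aleph_1$, but then $h''E\sst \{\gamma\}\times\o_2$ has cardinality $\le\aleph_1$ and is therefore bounded in $\o_1\times\o_2$, contradicting the hypothesis.

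For $D_T\le_T [\o_2]^{\le\o}$, the crucial observation is that the non-stationary ideal on $\o_1$ is $\sigma$-complete, so $D_T$ is closed under countable unions. Enumerating $D_T=\{X_\alpha:\alpha<\o_2\}$, the formula $g(A)=\bigcup_{\alpha\in A}X_\alpha$ defines a convergent map $[\o_2]^{\le\o}\to D_T$, with $\{\alpha\}$ serving as a threshold for $X_\alpha$.

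The strictness $[\o_2]^{\le\o}\not\le_T D_T$ is the main obstacle and is where Lemma \ref{fourth} is decisive. I would argue by contradiction: together with the reduction just given, the assumption $[\o_2]^{\le\o}\le_T D_T$ would yield $D_T\equiv_T [\o_2]^{\le\o}$, which I would use to produce a cofinal subset of $D_T$ whose every uncountable subset is unbounded, directly violating Lemma \ref{fourth}. Embed both $D_T$ and $[\o_2]^{\le\o}$ cofinally in a common directed poset $P$; using cofinality of $D_T$ in $P$, for each $p\in P$ choose $\psi(p)\in D_T$ with $p\le \psi(p)$, and observe that $\psi(\ddd)\sst D_T$ is cofinal in $D_T$. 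The `every uncountable subset is unbounded' property of $\ddd$ from Lemma \ref{existence} transfers to $P$, since any would-be bound in $P$ of an uncountable $S\sst \ddd$ sits below some element of $\ddd$ by cofinality and would then bound $S$ inside $\ddd$. This in turn forces $\psi|_\ddd$ to have countable fibers and, for any uncountable $S\sst \ddd$, the image $\psi(S)$ to be uncountable and unbounded in $D_T$ (an upper bound in $D_T$ of $\psi(S)$ would also bound $S$ in $P$). Thus $\psi(\ddd)$ is the desired cofinal subset of $D_T$, yielding the contradiction with Lemma \ref{fourth}. The delicate point here is tracking bounds cleanly across the three directed sets and extracting the correct Tukey-invariant witness from the common cofinal-similarity representation.
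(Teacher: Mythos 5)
Your proposal is correct and follows essentially the same route as the paper: the same style of explicit reductions for $\o_1\times\o_2\le_T D_T$, Corollary \ref{part} for $D_T\nleq_T \o_1\times\o_2$, and the combination of Lemma \ref{existence} with Lemma \ref{fourth}, via representing $D_T$ and $[\o_2]^{\le\o}$ as cofinal subsets of a common directed set, for $[\o_2]^{\le\o}\nleq_T D_T$. The only local variation is that for $D_T\le_T [\o_2]^{\le\o}$ you use a convergent map $A\mapsto\bigcup_{\a\in A}X_{\a}$ resting on closure of $D_T$ under countable unions ($\sigma$-completeness of the non-stationary ideal), where the paper instead writes down a Tukey map $X\mapsto\set{\a<\o_2:t_{\a}\in X}$; both are one-step arguments and yours is perfectly sound.
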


\begin{proof}
 Let us enumerate $T=\set{t_{\a}:\a<\o_2}$.
 
 First we prove $\o_1\times\o_2\le_T D_T$.
 We will find a Tukey function $f:\o_1\times\o_2\to D_T$.
 So, for $\seq{\b,\g}\in \o_1\times\o_2$, define
 \[
  f(\b,\g)=\set{t_{\g}}\cup\set{t_0\conc\seq{\a}\in T:\a<\b}.
 \]
 The function $f$ is properly defined because the image of every element of $\o_1\times\o_2$ is countable, thus belongs to $D_T$.
 Now we prove that $f$ is Tukey, i.e. the image of every unbounded set is unbounded.
 Let $X$ be unbounded in $\o_1\times\o_2$.
 The either $\abs{\pi_{\o_1}''X}=\aleph_1$ or $\abs{\pi_{\o_2}''X}=\aleph_2$.
 Suppose first that $\abs{\pi_{\o_2}''X}=\aleph_2$, and let $Y=\set{\g<\o_2: t_{\g}\in \pi_{\o_2}''X}$.
 Then $Y\sst \bigcup f''X$, in particular $\bigcup f''X$ is of cardiality $\aleph_2$, so $f''X$ cannot be bounded in $D_T$.
 Suppose now that $\abs{\pi_{\o_1}''X}=\aleph_1$.
 Then $\set{t_0\conc\seq{\a}:\a<\o_1}\sst\bigcup f''X$ thus showing that $f''X$ is not bounded in $D_T$.
 
 Next we prove $D_T\le_T [\o_2]^{\le\o}$.
 For $X\in D_T$ define $g(X)=\set{\a<\o_2:t_{\a}\in X}$.
 Suppose now that $\mathcal A$ is unbounded in $D_T$.
 Then $\bigcup\mathcal A$ is uncountable because otherwise $\mathcal A$ would be bounded.
 This means that
 \[
  \abs{\bigcup g''\mathcal A}=\abs{\bigcup\set{g(X):X\in \mathcal A}}=\abs{\set{\a<\o_2:t_{\a}\in\bigcup\mathcal A}}\ge\aleph_1.
 \]
 Thus, $g''\mathcal A$ is unbounded in $[\o_2]^{\le\o}$.

 Now we prove that $D_T\nleq_T \o_1\times\o_2$.
 Suppose the contrary, that there is a Tukey function $h:D_T\to \o_1\times\o_2$.
 For $\a<\o_1$, let
 \[
  D_{\a}=\set{X\in D_T: (\exists \b<\o_2)\ h(X)=\seq{\a,\b}}.
 \]
 Since $h$ is a function, $D_T=\bigcup_{\a<\o_1}D_{\a}$ is a partition.
 By Corollary \ref{part}, there is an $\a<\o_1$ and an unbounded set $E\sst D_{\a}$ of cardinality $\aleph_1$.
 Enumerate $E=\set{X_{\xi}:\xi<\o_1}$, and let $\b_{\xi}<\o_2$ be such that $h(X_{\xi})=\seq{\a,\b_{\xi}}$, and let $\b<\o_2$ be such that $\b_{\xi}<\b$ for every $\xi<\o_1$.
 Then $h''E=\set{\seq{\a,\b_{\xi}}:\xi<\o_1}$.
 Since $E$ is unbounded and $h$ is Tukey, $h''E$ is unbounded.
 This is a contradiction because $h''E\le \seq{\a,\b}$ in $\o_1\times\o_2$.
 
 Finaly, we prove $[\o_2]^{\le\o}\nleq_T D_T$.
 So suppose the contrary that $[\o_2]^{\le\o}\le_T D_T$.
 We already know that $D_T\le_T [\o_2]^{\le\o}$, so the assumption gives us $D_T\equiv_T [\o_2]^{\le\o}$.
 This means that there is a directed set $E$ such that both $D_T$ and $[\o_2]^{\le\o}$ are cofinal subsets of $E$.
 By Lemma \ref{existence} there is a set $A\sst [\o_2]^{\le\o}$ cofinal in $[\o_2]^{\le\o}$, and such that every uncountable $B\sst A$ is unbounded in $[\o_2]^{\le\o}$.
 Now, for each $x\in A$ take $d_x\in D_T$ such that $x\le d_x$.
 Since $A$ is cofinal in $E$, the set $D'=\set{d_x:x\in A}$ is also cofinal in $E$, and consequently cofinal in $D_T$.
 By Lemma \ref{fourth}, there is an uncountable subset $D^*\sst D'$ bounded in $D_T$.
 Let $d^*\in D_T$ be such that $d\le d^*$ for each $d\in D'$.
 Consider the set $B=\set{x\in A:d_x\in D'}$.
 Since $D'$ is uncountable, the set $B$ is also uncountable.
 Thus, by the assumption on $A$, the set $B$ is unbounded in $[\o_2]^{\le}$, but also in $E$ because $[\o_2]^{\le\o}$ is a cofinal subset of $E$.
 Then, for each $x\in B$ we have $x\le d_x\le d^*$, contradicting the unboundedness of $B$ in $E$.
 This shows that $D_T\not\equiv_T [\o_2]^{\le\o}$, and consequently $D_T\nleq_T \o_1\times\o_2$.
\end{proof}

This, together with the following theorem, concludes the proof of Theorem \ref{prvaizmedju}

\begin{theorem}[Gregory, see \cite{gregory}]
 If GCH holds and there is a non-reflecting stationary subset of $S^2_0$, then there is an $\aleph_1$-branching $\aleph_2$-Souslin tree.
\end{theorem}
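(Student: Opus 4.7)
The plan is to carry out Gregory's original construction of a higher Souslin tree, tailored so that the resulting tree is $\aleph_1$-branching. Fix a non-reflecting stationary $S\sst S^2_0$. The first step is to extract from GCH and the non-reflection of $S$ a guessing principle $\diamondsuit_S$: a sequence $\seq{A_{\alpha}:\alpha\in S}$ with $A_{\alpha}\sst \omega_1^{<\alpha}$ such that for every $X\sst \omega_1^{<\omega_2}$ of size at most $\aleph_1$, the set $\set{\alpha\in S:X\cap \omega_1^{<\alpha}=A_{\alpha}}$ is stationary in $\omega_2$. Deriving $\diamondsuit_S$ from GCH and non-reflection is one of the main technical contributions of Gregory's paper: GCH ensures that the number of potential guesses $X\cap \omega_1^{<\alpha}$ at each level is only $\aleph_1$, while the non-reflection of $S$ furnishes the combinatorial room needed to arrange the diamond.

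Given $\seq{A_{\alpha}:\alpha\in S}$, construct $T\sst \omega_1^{<\omega_2}$ by recursion on levels. Let $T_0=\set{\emptyset}$ and $T_{\alpha+1}=\set{t\conc\seq{\beta}:t\in T_{\alpha},\beta<\omega_1}$, which secures both the $\aleph_1$-branching condition and the bound $|T_{\alpha+1}|\le \aleph_1$. At a limit $\alpha$ of cofinality $\omega$, pick a cofinal $\omega$-sequence in $\alpha$ and, for each $t\in T\rest\alpha$, build a cofinal branch $b_t$ through $t$ traversing this sequence, placing the limit $\bigcup b_t$ in $T_{\alpha}$; if $\alpha\in S$ and $A_{\alpha}$ happens to be a maximal antichain in $T\rest\alpha$, force $b_t$ to pass through some element of $A_{\alpha}$, which is possible by maximality. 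At a limit $\alpha$ of cofinality $\omega_1$, use non-reflection of $S$ to pick a club $C_{\alpha}\sst \alpha\setminus S$ of order type $\omega_1$, and thread each $t\in T\rest \alpha$ through the already-designated canonical branches at the levels in $C_{\alpha}$ to produce exactly one cofinal branch through $t$, whose limit is added to $T_{\alpha}$. The inductive invariant that each level has size at most $\aleph_1$ and that each node comes equipped with a canonical cofinal branch must be carried throughout the recursion.

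For the Souslin property, suppose $A\sst T$ is a maximal antichain of cardinality $\aleph_2$. The set
\[E=\set{\alpha<\omega_2:A\cap (T\rest\alpha)\text{ is a maximal antichain in }T\rest \alpha}\]
is a club in $\omega_2$, hence $E\cap S$ is stationary, and the guessing principle yields $\alpha\in E\cap S$ with $A_{\alpha}=A\cap (T\rest\alpha)$. By construction, every element of $T_{\alpha}$ extends some element of $A_{\alpha}$, and the successor rule then propagates this to every level above $\alpha$. Thus every node of $T$ above level $\alpha$ extends an element of $A_{\alpha}\sst A$, contradicting $|A|=\aleph_2>|A_{\alpha}|$.

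The main obstacle is the handling of limit levels $\alpha$ of cofinality $\omega_1$: one must add only as many cofinal branches as keep $|T_{\alpha}|\le \aleph_1$, while avoiding conflict with the antichain-sealing performed at earlier levels of $S$. This is precisely where the non-reflection of $S$ becomes essential, as it allows $C_{\alpha}$ to be chosen inside $\alpha\setminus S$ and the threading to proceed along levels where no sealing constraint was imposed, so that the canonical branches cohere. Propagating this coherence as a global inductive invariant through the $\omega_2$-step recursion, while simultaneously respecting the diamond guesses at stages in $S$, is the technical heart of the proof.
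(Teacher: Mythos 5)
First, a point of comparison: the paper does not prove this statement at all --- it is quoted with attribution to Gregory's 1976 paper, so there is no in-paper argument to measure your sketch against, only Gregory's published one. Your outline does follow that route: derive a diamond principle on $S$, build $T\sst\omega_1^{<\omega_2}$ by recursion with full $\omega_1$-splitting at successors (which gives the $\aleph_1$-branching the paper needs), seal guessed maximal antichains at stages in $S$ (all of cofinality $\omega$), and use the non-reflection of $S$ at limit levels of cofinality $\omega_1$ by threading canonical branches along a club $C_\alpha\sst\alpha\setminus S$. The verification of the Souslin property via the club $E$ of levels at which $A$ restricts to a maximal antichain is the standard one.

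Two things need repair. The more serious one is that the guessing principle as you state it cannot be applied where you apply it: you restrict to $X\sst\omega_1^{<\omega_2}$ of size at most $\aleph_1$, but in the final paragraph you invoke the principle for $X=A$, a maximal antichain assumed (for contradiction) to have cardinality $\aleph_2$. The correct $\diamondsuit_S$ guesses the traces $X\cap\omega_1^{<\alpha}$ of an \emph{arbitrary} $X\sst\omega_1^{<\omega_2}$ (equivalently, after coding, $X\cap\alpha$ for arbitrary $X\sst\omega_2$); the role of GCH is to make the counting of candidate traces at points of cofinality $\omega$ come out right (a trace at such an $\alpha$ is a countable union of bounded sets, and CH together with $2^{\aleph_1}=\aleph_2$ keeps the number of these under control), not to bound the size of $X$ itself. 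The second point is a misattribution: non-reflection of $S$ plays no role in obtaining the diamond. Gregory derives $\diamondsuit(E)$ for \emph{every} stationary $E\sst S^2_0$ from GCH alone; non-reflection enters only where you later use it, namely in choosing the club $C_\alpha\sst\alpha\setminus S$ at limits of cofinality $\omega_1$ so that the canonical branches cohere and level $\alpha$ keeps size $\aleph_1$. With the diamond restated without the size restriction and its derivation credited to GCH alone, your outline is a faithful account of Gregory's proof.
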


\section{Directed set between $[\o_1]^{<\o}\times [\o_2]^{\le\o}$ and $[\o_2]^{<\o}$}

The standing assumption in this section will be that GCH holds, and that $S$ is a non-reflecting stationary subset of $S^2_0$.
GCH implies that there is a collection $\mathcal C=\set{C_{\a}:\a\in S}$ of sets of order type $\o$, such that $C_{\a}\sst \a$ for each $\a\in S$, and that for each set $X\sst \o_2$ of size $\aleph_2$ there is some $\a\in S$ such that $C_{\a}\sst X$.
Define \[D_{\mathcal C}=\set{Y\in [\o_2]^{\aleph_0}: (\forall\a\in S) \abs{Y\cap C_{\a}}<\aleph_0},\] and consider $D_{\mathcal C}$ directed by inclusion.
In this section we prove that $[\o_1]^{<\o}\times D_{\mathcal C}$ is the directed set strictly between $[\o_1]^{<\o}\times [\o_2]^{\le\o}$ and $[\o_2]^{<\o}$ in the Tukey ordering.

Since GCH implies that $\aleph_2^{\aleph_0}=\aleph_2$, we have $\abs{D_{\mathcal C}}=\aleph_2$.
Thus $\cf(D_{\mathcal C})=\aleph_2$, and consequently $D_{\mathcal C}\le_T [\o_2]^{<\o}$.
Together with $[\o_1]^{<\o}\le_T [\o_2]^{<\o}$ and Lemma \ref{upperbound}, this implies that $[\o_1]^{<\o}\times D_{\mathcal C}\le_T [\o_2]^{<\o}$. 

Since GCH implies $\abs{D_{\cc}}=\abs{[\o_2]^{\le\o}}=\aleph_2$, there is a 1-1 function $\phi:D_{\cc}\to \o_2$.
Denote $X=\set{x\cup\set{\phi(x)}:x\in D_{\cc}}$.
Clearly, $X$ is a cofinal subset of $D_{\mathcal C}$.
Since $\phi$ is 1-1, union of every uncountable subset of $X$ contains an uncountable subset of $\aleph_2$ as a subset.
Hence, every uncountable subset of $X$ is unbounded in $D_{\mathcal C}$.
This means that any 1-1 function $g:[\o_2]^{\le\o}\to X$ witnesses that $[\o_2]^{\le\o}\le_T D_{\mathcal C}$ holds.
Now $f$ mapping $[\o_1]^{<\o}\times [\o_2]^{\le\o}$ to $[\o_1]^{<\o}\times D_{\mathcal C}$, defined by $f(F,Y)=(F,g(Y))$, is Tukey. So we finally have
\[
 [\o_1]^{<\o}\times [\o_2]^{\le\o}\le_T [\o_1]^{<\o}\times D_{\mathcal C}\le_T [\o_2]^{<\o}
\]
In the remainder of this section we show that these inequalities are strict.

\begin{lemma}
 There is no Tukey map from $D_{\mathcal C}\times [\o_1]^{<\o}$ into $[\o_2]^{\le\o}\times [\o_1]^{<\o}$.
\end{lemma}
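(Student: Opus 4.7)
The plan is to suppose that a Tukey map $f\colon D_{\cc}\times[\o_1]^{<\o}\to[\o_2]^{\le\o}\times[\o_1]^{<\o}$ exists and to exhibit a countable unbounded subset of $D_{\cc}\times\set{\ps}$ whose $f$-image is bounded. The tension I want to exploit is the following: although $\abs{D_{\cc}}=\aleph_2$, the definition of $D_{\cc}$ forces certain countable families (those that meet some $C_\a$ in infinitely many points) to be unbounded, while in $[\o_2]^{\le\o}\times[\o_1]^{<\o}$ any countable set whose second coordinates are constant is automatically bounded.

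First, restrict $f$ to the slice $D_{\cc}\times\set{\ps}$ and write $f(Y,\ps)=(X_Y,F_Y)$ with $X_Y\in[\o_2]^{\le\o}$ and $F_Y\in[\o_1]^{<\o}$. Since $\abs{D_{\cc}}=\aleph_2$ and $\abs{[\o_1]^{<\o}}=\aleph_1$, the pigeonhole principle produces a fixed $F\in[\o_1]^{<\o}$ such that $A=\set{Y\in D_{\cc}:F_Y=F}$ has cardinality $\aleph_2$.

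Next I would argue that $\abs{\bigcup A}=\aleph_2$. If $\bigcup A$ had cardinality at most $\aleph_1$, then $A\sst[\bigcup A]^{\le\o}$ would have cardinality at most $\aleph_1^{\aleph_0}$, which equals $\aleph_1$ under GCH, contradicting $\abs{A}=\aleph_2$. The defining property of the family $\cc$ then hands us some $\a\in S$ with $C_\a\sst\bigcup A$.

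Finally, enumerate $C_\a=\set{\g_n:n<\o}$ and, for each $n$, choose $Y_n\in A$ with $\g_n\in Y_n$. The set $B=\set{(Y_n,\ps):n<\o}$ is unbounded in $D_{\cc}\times[\o_1]^{<\o}$: a bound $Y^*\in D_{\cc}$ for $\set{Y_n:n<\o}$ would satisfy $C_\a\sst\bigcup_n Y_n\sst Y^*$, contradicting $\abs{Y^*\cap C_\a}<\aleph_0$. On the other hand, $f''B=\set{(X_{Y_n},F):n<\o}$ is bounded in $[\o_2]^{\le\o}\times[\o_1]^{<\o}$ by $(\bigcup_n X_{Y_n},F)$, since a countable union of countable sets is countable. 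This contradicts $f$ being Tukey. I do not foresee a real obstacle: the single idea is that pigeonhole pins the $[\o_1]^{<\o}$-coordinate of the image of the slice, after which the pathological countable unbounded sets built into $D_{\cc}$ via the family $\cc$ have nowhere to go in the codomain.
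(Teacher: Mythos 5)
Your proof is correct and follows essentially the same route as the paper's: pigeonhole the finite $[\o_1]^{<\o}$-coordinate of the image over an $\aleph_2$-sized slice, invoke the guessing property of $\cc$ to find $C_\a$ inside the relevant union, and derive a contradiction from a countable unbounded set whose image is bounded by the pair consisting of a countable union of countable sets and the fixed finite set. The only difference is cosmetic: the paper evaluates $f$ at the singletons $(\set{\xi},\ps)$ and applies the guessing property to the index set directly, while you work with arbitrary elements of $D_{\cc}$ and add a short GCH computation to see that $\abs{\bigcup A}=\aleph_2$ — which in fact sidesteps the paper's slight abuse of notation in treating $\set{\xi}$ as an element of $D_{\cc}\sst[\o_2]^{\aleph_0}$.
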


\begin{proof}
 Suppose that $f:D_{\mathcal C}\times [\o_1]^{<\o}\to [\o_2]^{\le\o}\times [\o_1]^{<\o}$ is Tukey.
 For each $\xi<\o_2$ denote $(x_{\xi},s_{\xi})=f(\set{\xi},\ps)$.
 Consider the set $\set{(x_{\xi},s_{\xi}):\xi<\o_2}$.
 Since $[\o_1]^{<\o}$ is of size $\aleph_1$, there is some $X\sst \o_2$ of size $\aleph_2$, and $s\in [\o_1]^{<\o}$ such that $s_{\xi}=s$ for each $\xi\in X$.
 By the assumption on $\mathcal C$, there is some $\a\in S$ such that $C_{\a}\sst X$.
 Now, the set $A=\set{(\set{\xi},\ps):\xi\in C_{\a}}$ is unbounded in $D_{\mathcal C}\times [\o_1]^{<\o}$.
 In particular the set $\pi_{D_{\mathcal C}}''A$ is unbounded in $D_{\mathcal C}$, as witnessed by the infinite intersection $\pi_{D_{\mathcal C}}''A\cap C_{\a}$.
 But the set $f''A=\set{(x_{\xi},s):\xi\in C_{\a}}$ is bounded in $[\o_2]^{\le\o}\times [\o_1]^{<\o}$ contradicting the assumption that $f$ is a Tukey map.
 To see that $f''A$ is bounded, note that $C_{\a}$ is countable so the bound for $f''A$ is $(\bigcup_{\xi\in  C_{\alpha}}x_{\xi},s)$. This is because $\bigcup_{\xi\in C_{\a}}x_{\xi}$ is in $[\o_2]^{\le\o}$ (being a countable union of countable sets).
\end{proof}

\begin{lemma}
 There is no Tukey map from $[\o_2]^{<\aleph_0}$ into $[\o_1]^{<\o}\times D_{\mathcal C}$.
\end{lemma}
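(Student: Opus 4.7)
My plan is to mimic the strategy of the preceding lemma together with the proof of strictness of inequality (15) in Lemma \ref{reverse}. Suppose for contradiction that $f:[\o_2]^{<\o}\to[\o_1]^{<\o}\times D_{\mathcal C}$ is Tukey, and write $f(\set{\xi})=(s_{\xi},y_{\xi})$ for $\xi<\o_2$. If $|f''[\o_2]^1|<\aleph_2$, then by pigeonhole some pair $(s,y)$ has $\aleph_2$-many singleton preimages, and the infinite set $\set{\set{\xi}:f(\set{\xi})=(s,y)}$ is unbounded in $[\o_2]^{<\o}$ while its $f$-image is a singleton, contradicting Tukeyness. Otherwise $|f''[\o_2]^1|=\aleph_2$, and since $|[\o_1]^{<\o}|=\aleph_1$ under GCH, a pigeonhole on the first coordinate yields $X\sst\o_2$ of size $\aleph_2$ and $s\in[\o_1]^{<\o}$ with $s_{\xi}=s$ for $\xi\in X$.

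It suffices to exhibit an infinite $I\sst X$ with $\bigcup_{\xi\in I}y_{\xi}\in D_{\mathcal C}$, because then $A=\set{\set{\xi}:\xi\in I}$ is an infinite (hence unbounded) subset of $[\o_2]^{<\o}$ whose image $f''A=\set{(s,y_{\xi}):\xi\in I}$ is bounded above by $(s,\bigcup_{\xi\in I}y_{\xi})$. To this end I would apply the $\Delta$-system lemma to the family $\set{y_{\xi}:\xi\in X}$ of $\aleph_2$-many countable sets, extracting $X'\sst X$ of size $\aleph_2$ and a countable root $r\in D_{\mathcal C}$ with $y_{\xi}\cap y_{\eta}=r$ for distinct $\xi,\eta\in X'$. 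If $y_{\xi}=r$ on an $\aleph_2$-sized subset of $X'$, any countably infinite $I$ of such indices yields $\bigcup y_{\xi}=r\in D_{\mathcal C}$, and we are done.

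Otherwise $e_{\xi}:=y_{\xi}\setminus r$ is nonempty for $\aleph_2$-many $\xi\in X'$, with the $e_{\xi}$'s pairwise disjoint. For $\b\in S$ set $P_{\b}=\set{\xi\in X':e_{\xi}\cap C_{\b}\neq\ps}$; since the sets $e_{\xi}\cap C_{\b}$ are pairwise disjoint nonempty subsets of the countable set $C_{\b}$, each $P_{\b}$ is at most countable. The condition $\bigcup_{\xi\in I}y_{\xi}=r\cup\bigsqcup_{\xi\in I}e_{\xi}\in D_{\mathcal C}$ amounts to $|I\cap P_{\b}|<\aleph_0$ for every $\b\in S$, so the remaining task is to find an infinite $I\sst X'$ almost disjoint from every $P_{\b}$.

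The hard part is producing this $I$ against $\aleph_2$-many countable obstructions, and here the non-reflection hypothesis is essential. My plan is to fix an elementary submodel $M\prec H(\theta)$ of cardinality $\aleph_1$, closed under countable sequences and containing $f$, $\mathcal C$ and $\seq{y_{\xi}:\xi<\o_2}$, so that $\delta:=M\cap\o_2$ has cofinality $\o_1$. By non-reflection, $S\cap\delta$ is non-stationary in $\delta$, giving a club $E\sst\delta$ disjoint from $S$. Using GCH to bound $|D_{\mathcal C}\cap\mathcal P(\delta)|\leq\aleph_1$, I would pigeonhole $y_{\xi}\cap\delta$ to a constant value on an $\aleph_2$-sized subset of $X'\setminus M$, then perform a further $\Delta$-system step on $\set{y_{\xi}\setminus\delta}$; the constant subcase again closes the argument, while in the generic subcase the only remaining obstructions $P_{\b}$ come from $\b\in S\setminus M$, i.e.\ $\b\geq\delta$. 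A combinatorial diagonalization employing the club $E$ (whose members are not in $S$) and the fine structure of the ladders $C_{\b}$ for $\b\geq\delta$ then delivers the required $I$, completing the contradiction.
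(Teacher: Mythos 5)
Your reduction is sound and matches the paper's opening moves: fixing the finite coordinate by pigeonhole, extracting an $\aleph_2$-sized family $\set{y_{\xi}:\xi\in X}$ of elements of $D_{\mathcal C}$ every infinite subfamily of which must be unbounded, refining to a $\Delta$-system, and recasting the goal as finding an infinite $I$ meeting each countable obstruction set $P_{\b}$ finitely. (The paper reaches the same point slightly differently, quoting the characterization of $[\o_2]^{<\o}\le_T D$ via an $\aleph_2$-sized subset all of whose infinite subsets are unbounded, and it uses that the $\D$-system can be taken \emph{increasing}.) The problem is that everything after that point --- which is the entire content of the lemma --- is asserted rather than proved: ``a combinatorial diagonalization employing the club $E$ \dots then delivers the required $I$'' is exactly the step where the non-reflecting stationary set has to do its work, and you do not carry it out.

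Worse, the setup you sketch for that final step points in a direction where it cannot be completed. You propose to pigeonhole $y_{\xi}\cap\d$ for $\d=M\cap\o_2$ so that the disjoint parts $e_{\xi}$ all live \emph{above} $\d$, leaving only obstructions $C_{\b}$ with $\b\ge\d$; but your club $E$ disjoint from $S$ lives \emph{inside} $\d$, so it gives no purchase on those remaining $C_{\b}$'s, and the problem you are left with is isomorphic to the one you started from. The paper does the opposite: it fixes a \emph{continuous} increasing chain $\seq{M_{\xi}:\xi<\o_2}$ of models, takes $\g$ of cofinality $\o_1$ that is a limit of closure points, uses non-reflection at $\g$ to find a club $E=\set{\d_{\a}:\a<\o_1}$ in $\g$ with $E\cap S=\ps$, and then selects $x_{\xi_{\a}}$ with $x_{\xi_{\a}}\setminus X_0\sst M_{\d_{\a}^{+}}\setminus M_{\d_{\a}}$, i.e.\ confines the disjoint parts to the pairwise disjoint intervals $[\d_{\a},\d_{\a}^{+})$ \emph{below} $\g$. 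Then for $I=\set{\xi_n:n<\o}$ and any $\a\in S$ one checks $C_{\a}\cap\bigcup_{n}x_{\xi_n}$ is finite by cases on the position of $\a$ relative to $\sup_n\d_n$ (which is not in $S$ because $E$ is closed). This interval separation along a club avoiding $S$ is the missing idea; without it, or some substitute, your argument does not close.
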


\begin{proof}
 Suppose that the lemma fails.
 This means that $[\o_2]^{<\aleph_0}\equiv_T [\o_1]^{<\o}\times D_{\mathcal C}$, i.e. that there is $Y\sst [\o_1]^{<\o}\times D_{\mathcal C}$ of size $\aleph_2$ such that every infinite subset of $Y$ is unbounded in $D_{\mathcal C}$.
 Since $Y$ is of size $\aleph_2$, there is a finite $F\sst \o_1$, and $X\sst D_{\mathcal C}$ of size $\aleph_2$ such that $(F,x)\in Y$ for each $x\in X$.
 Let $X=\set{x_{\a}:\a<\o_2}$.
 Clearly, every infinite subset of $X$ is unbounded in $D_{\mathcal C}$.
 
 Using GCH we may assume that $X$ forms an increasing $\Delta$-system with the root $X_0$.
 In other words, for $\a<\b<\o_2$ we have $x_{\a}\cap x_{\b}=X_0$ and $x_{\a}\setminus X_0<x_{\b}\setminus X_0$.
 Take some large enough $\theta$ so that all relevant object belong to $H(\theta)$.
 Pick a continuous increasing sequence $\seq{M_{\xi}:\xi<\o_2}$ of elementary submodels of $H(\theta)$ of cardinality $\aleph_1$, such that $X_0\in M_0$.
 Now the set $C=\set{\delta<\o_2:M_{\d}\cap\o_2=\d}$ is a club in $\o_2$.
 Let $\g\in C$ be such that $\cf(\g)=\o_1$ and $\sup(C\cap \g)=\g$.
 Such a $\g$ exists because $C$ is a club.
 Since $S$ is non-reflecting, there is $E\sst C\cap \g$, club in $C\cap \g$ such that $\otp(E)=\o_1$ and $E\cap S=\ps$.
 Let $E=\set{\d_{\a}:\a<\o_1}$ be an increasing enumeration of $E$.
 For each $\a<\o_1$ let $\d_{\a}^+=\min(E\setminus \d_{\a})$.
 By elementarity, there is a sequence $\seq{x_{\xi_{\a}}:\a<\o_1}$ such that $x_{\xi_{\a}}\setminus X_0\sst M_{\d^+_{\a}}\setminus M_{\d_{\a}}$.
 Consider the set $x=\bigcup_{n<\o}x_{\xi_n}$ and take $\a\in S$.
 Then $\a\notin E$, so either $\a>\sup_{n<\o}\d_n$ or $\a<\sup_{n<\o}\d_n$.
 If $\a>\sup_{n<\o}\d_n$, then $C_{\a}\cap x$ is a finite set.
 If $\a<\sup_{n<\o}\d_n$, then there is $n_0$ so that $\d_{n_0+1}>\a$.
 Since $C_{\a}\cap x_{\xi_m}$ is finite for each $m\le n_0$, and $C_{\a}\cap x=\bigcup_{m\le n_0}(C_{\a}\cap x_{\xi_m})$ it must be that $C_{\a}\cap x$ is finite in this case also.
 Thus $x\in D_{\mathcal C}$ is a bound for a countable set $\set{x_{\xi_n}:n<\o}\sst X$, which is in contradiction with the choice of $X$. So the lemma is proved.
\end{proof}

This concludes the proof of Theorem \ref{drugaizmedju}.

\end{document}